\title{Quantization Dimension of $1$-variable Random Self-Similar Measures}
\author[1]{Akash Banerjee\thanks{Corresponding author email: akash.mapping@gmail.com}}
\author[2]{Alamgir Hossain}
\author[3]{Md. Nasim Akhtar}
\affil[1,2,3]{Department of Mathematics, Presidency
University, Kolkata, 700073, West Bengal, India}
\newtheorem{theorem}{Theorem}%  meant for continuous numbers
\newtheorem{proposition}[theorem]{Proposition}% 
\newtheorem{lemma}[theorem]{Lemma}% 
\newtheorem{corollary}[theorem]{Corollary}% 
\newtheorem{example}{Example}%
\newtheorem{remark}{Remark}%
\newtheorem{definition}{Definition}%
\newcommand{\pcr}[2]{\underset{\sigma \in #1}{\sum} (p_{\sigma} c_{\sigma}^r)^{\frac{#2}{r+#2}}}
\newcommand{\gmomg}{\Gamma_{\omega}}
\newcommand{\lmomgp}[1]{\Lambda_{\omega}^{(#1)}}
\newcommand{\lmbd}[2]{\Lambda_{#1}^{#2}}
\newcommand{\qncf}[3]{#1^{\frac{1}{#2}}V_{#1,r}^{\frac{1}{r}}(#3)}
\newcommand{\kr}{\kappa_r}
\newcommand{\krr}{\frac{\kappa_r}{r+\kappa_r}}
\newcommand{\Om}[1]{\Omega_{#1}}
\newcommand{\om}[1]{\omega_{#1}}
\newcommand{\N}{\mathbb{N}}
\newcommand{\Rl}{\mathbb{R}}
\newcommand{\lr}[3]{\left#1 #2 \right#3}
\newcommand{\gmomgn}[1]{\Gamma_{\om{},#1}}
\date{}
\begin{document}

\maketitle

%% Abstract
\begin{abstract}
%% Text of abstract
The quantization problem for random fractals presents unique challenges due to the lack of uniform geometric scaling inherent in deterministic systems. In this article, we establish the almost sure quantization dimension for a class of $1$-variable (homogeneously) random self-similar measures. Unlike the deterministic setting, where the dimension is derived from a fixed pressure function, we prove that in the random case, the quantization dimension $\kappa_{r}$ is the unique zero of the expectation of the topological pressure. We rigorously justify this by exploiting the ergodicity of the shift map on the symbolic space to control distortion errors across non-uniform scales. Our results highlight the thermodynamic formalism underlying the quantization of random dynamical systems. 
% Furthermore, we resolve the stability of the upper quantization coefficient, proving that it is finite almost surely if the system satisfies a \underline{`balanced weight'} condition.
% In this article, we study a class of invariant measures generated by a random homogeneous self-similar iterated function system. Unlike the deterministic setting, the random quantization problem requires controlling distortion errors across non-uniform scales. For $r>0$, under a suitable separation condition, we precisely determine the almost sure quantization dimension $\kr$ of this class by utilizing the ergodic theory of the shift map on the symbolic space. By imposing an additional separation condition, we establish almost sure positivity of the $\kr$-dimensional lower quantization coefficient.  Furthermore, without assuming any separation condition, we provide a sufficient condition that guarantees almost sure finiteness of the  $\kr$-dimensional upper quantization coefficient. We also include some illustrative examples.
\end{abstract}

\section{Introduction}\label{sec1}
 The quantization problem originated in the field of digital signal processing and information theory \cite{4,1,2,3,zador_ieee}. An example of quantization is the conversion of a continuous analog signal, like a sound wave, into a discrete digital signal. Subsequently, mathematicians explored the problem extensively over the last few decades, starting with Graf and Luschgy \cite{graf1997cantorquantization,Graf2000AsymptoticsOT,Graf2000FoundationsOQ,graf_luschgy_2002_quant_self-similar}. Mathematically, this problem deals with approximations with respect to the Wasserstein $L_r$ metric, of a given probability measure by discrete probability measures with finite support. The error arising from this approximation process is called the quantization error, and the asymptotic behaviour of this error is captured by the quantization dimension. In recent years, the theory of quantization has been undergoing rapid development, spurred by fundamental research, for instance \cite{priyadarshi2025,Graf_Luschgy_normbased_2006,marc_zhu_2015_recent_developments,
zhu_marc_trans_compactlysupported,   marc_zhu_2016_carpet, zhu_Kessebhmer_2017_UpperLowerMarkov,Lindsay_Mauldin_quantization_conformal_2002, 
Amit_etal_bi_recurrent,ZHU_2008_FConformal,ZHU2008_canor_like,
zhu_2012_note,
ZHU2021_optimalVoronoi,
ZHU2017_exact_conv_order,
Zhu_2020_Asymp_carpet}. Moreover, it is also being applied to vast areas of mathematics and other fields, such as \cite{Apl_Quant_7,Apl_Quant_3,Apl_Quant_6,Apl_Quant_5,Apl_Quant_1}.
 
     Let $r>0$ and $\mu$ be a Borel probability measure on the Euclidean space $\Rl^d$. For $n \in \N$, the $n$-th quantization error (for $\mu$) of order $r$ is defined as the following:
    \begin{equation}\label{eqn:quantization error}
        V_{n,r}(\mu):=\inf \lr{\{}{\int d(x,\alpha)^r d\mu(x): \alpha \subset \Rl^d,~ \operatorname{card}(\alpha)\leq n}{\}},
    \end{equation}
    where $d(x,\alpha):= \inf \{d(x,a): a \in \alpha\}$ and $d(x,a):=\norm{x-a}$ (where $\norm{\cdot}$ denotes the usual norm on $\Rl^d$), also $\operatorname{card}(\cdot)$ denotes the cardinality of a set. The set $\alpha \subset \Rl^d$ is called an $n$-optimal set (for $V_{n,r}(\mu)$) if the infimum in $(\ref{eqn:quantization error})$ is attained at $\alpha.$ Graf and Luschgy \cite[Theorem 4.12]{ Graf2000FoundationsOQ} gave a sufficient condition for the existence of such optimal sets, namely
    \begin{equation}\label{inq:suff optimal}
        \int \norm{x}^r d\mu(x)<\infty.
    \end{equation}
    As in this paper, we will only consider measures with compact support, (\ref{inq:suff optimal}) always holds, ensuring the existence of optimal sets for $V_{n,r}(\mu)$. The lower and upper quantization dimension of order $r$ for $\mu$ are defined respectively as:
    \begin{equation*}
        \underline{D}_r(\mu):=\liminf_{n \to \infty} \frac{r \log n}{- \log V_{n,r}(\mu)}~,~~~\overline{D}_r(\mu):=\limsup_{n \to \infty} \frac{r \log n}{- \log V_{n,r}(\mu)}.
    \end{equation*}
    In case of $\underline{D}_r(\mu)=\overline{D}_r(\mu)$, the common value is called the quantization dimension of order $r$ for $\mu$ and it is denoted by $D_r(\mu)$. 
    
    The $s$-dimensional lower and upper quantization coefficients of order $r$ are respectively denoted by $\underline{Q}_r^{s}(\mu)$ and $\overline{Q}_r^{s}(\mu)$ and they are defined by
    \begin{equation*}
        \underline{Q}_r^{s}(\mu):=\liminf_{n \to \infty}\qncf{n}{s}{\mu},~~\overline{Q}_r^{s}(\mu):=\limsup_{n \to \infty}\qncf{n}{s}{\mu}, 
    \end{equation*}
    where $s>0$. As in the case of the Hausdorff dimension, the $s$-dimensional Hausdorff measure goes from zero to infinity when $s$ crosses the dimension \cite[Section 3.2]{falconer2013fractalBook}. Similarly, the lower and upper quantization dimensions are the critical points, where the lower and upper quantization coefficients (respectively) go from zero to infinity. So if for some $s>0$, both  $\underline{Q}_r^{s}(\mu)$ and $\overline{Q}_r^{s}(\mu)$ are positive and finite then $D_r(\mu)=s$ \cite[Proposition 11.3]{Graf2000FoundationsOQ}.
    
Let $N \in \N$ and $\{S_i: i=1,\dots,~N \}$ be a set of contracting similarities on $\Rl^d$ with contraction ratios $0<c_i<1$, $i=1,\dots,~N$. According to \cite{Hutschinson_1981_Frac_Self}, there exists a unique non-empty compact subset $F$ of $\Rl^d$ such that 
    \begin{equation*}
        F=\bigcup_{i=1}^N S_i(F).
    \end{equation*}
    F is known as the attractor or in this case, the self-similar set associated with the iterated function system (IFS) $\{\Rl^d, S_i: i=1,~\dots,~N\}$. Also, if probability $0\leq p_i \leq 
    1$ is assigned to the map $S_i$ ($i=1,\dots,~N$) with $\sum_{i=1}^N p_i=1$, by \cite{Hutschinson_1981_Frac_Self} there exists a unique Borel probability measure $\mu$ on $\Rl^d$ such that $\mu$ is supported on $F$ and
    \begin{equation*}
        \mu=\sum_{i=1}^N p_i (\mu \circ S_i^{-1}).
    \end{equation*}
    This measure $\mu$ is known as the invariant measure or in this case, the self-similar measure associated with the IFS and the probability vector $(p_1,\dots,~p_N).$ This IFS satisfies the open set condition (OSC) if there exists a non-empty open set $U \subset \Rl^d$ such that 
    \begin{equation*}
        U \supset \bigcup_{i=1}^N S_i(U)
    \end{equation*}
    with the union is disjoint. For $r>0$, let $\mathcal{d}_r >0 $ be given by 
    \begin{equation*}
        \sum_{i=1}^N (p_ic_i^r)^{\frac{\mathcal{d}_r}{r+\mathcal{d}_r}}=1.
    \end{equation*}
    Graf and Luschgy \cite{Graf2000AsymptoticsOT} showed that if the above IFS satisfies the OSC, then $D_r(\mu)=\mathcal{d}_r$.
    
    A random iterated function system (RIFS) is a collection of a finite number of deterministic iterated function systems (IFSs). Various combinations of these IFSs can generate a continuum of attractors and invariant measures. This allows us to study the properties (usually dimensional) of a typical attractor or measure within the continuum. There are various ways to interpret the term `typical' here. In this paper, we use a measure-theoretic framework to achieve that. 
    
    Let us denote a RIFS by $\mathcal{I}:=\{\mathcal{I}_i: i=1,\dots,~N\},$ where each $\mathcal{I}_i$ represents a deterministic IFS, namely $\mathcal{I}_i:=\{S_{i,j}:\Rl^d \to \Rl^d \mid j \in \mathbf{I}_i \},$ where $\mathbf{I}_i$ is a finite index set with $\operatorname{card}(\mathbf{I}_i)>1$. This paper focuses on $S_{i,j}$'s as contracting similarities with $0<c_{i,j}<1$ as similarity ratios and they are considered self-maps on a non-empty compact subset of $\Rl^d$, say $X$. That is for $i\in \lmbd{}{}$ and $j \in \mathbf{I}_i$, $S_{i,j}:X \to X$ satisfies 
    \begin{equation}\label{eq: self-similar RIFS}
        \norm{S_{i,j}(x)-S_{i,j}(y)}=c_{i,j}\norm{x-y}
    \end{equation}
    for all $x,y \in X.$

    Before continuing, it is imperative to introduce certain notations. 
    
    Set $\Lambda:=\{1,\dots,~N\}$ and $\Omega:=\Lambda^\N=\{\omega=(\omega_1,~\omega_2,\dots): \omega_i \in \Lambda
    \}$. Let $ \om{}=(\omega_1,~\omega_2,\dots) \in \Om{}.$ The random self-similar set or the attractor associated with $\om{}$ is defined by
    \begin{equation}\label{def: Random Attractor}
    F_\omega := \bigcap_{n \in \mathbb{N}}~~ \bigcup_{i_1 \in \mathbf{I}_{\omega_1},
    \dots, i_n \in \mathbf{I}_{\omega_n}} S_{\omega_1,i_1} \circ \dots \circ S_{\omega_n,i_n} (X).
\end{equation}
Now, assign probability $p_{i,j}>0$ to the map $S_{i,j}$ in a way that for each $i \in \lmbd{}{}$, we have $\sum_{j \in \mathbf{I}_i} p_{i,j}=1$. Then corresponding to each $\om{} \in \Om{}$ there exists a unique Borel probability measure $\mu_{\om{}}$ on $X$, with support $F_{\om{}}$, such that 
    \begin{equation}\label{def: Random Hom. self-similar measure}
        \mu_\omega:= \lim_{n \to \infty} \sum_{i_1 \in \mathbf{I}_{\omega_1},
    \dots, i_n \in \mathbf{I}_{\omega_n}} (p_{\omega_1,i_1}\dots p_{\omega_n,i_n}) \nu \circ (S_{\omega_1,i_1} \circ \dots \circ S_{\omega_n,i_n})^{-1},
    \end{equation}
    where $\nu$ is any arbitrary Borel probability measure on $X$. Here, the limit is taken in the sense of the Monge-Kantorovich metric (for details on this metric, refer to \cite{Hutschinson_1981_Frac_Self}). Proofs of existence and uniqueness of $\mu_{\om{}}$ are given in Section \ref{sec:Invariant Measure}.  
    
    The main goal of this paper is to estimate the quantization dimension of $\mu_{\om{}}$ for a typical $\om{} \in \Om{}$. Here, the term `typical' refers to `almost surely with respect to a naturally defined probability measure' on the space $\Om{}$. To construct such a natural probability measure first we assume that $\zeta:=(\zeta_1,\dots,~\zeta_N)$ be a probability vector with $\zeta_i>0$, which assigns probability $\zeta_i$ to the IFS $\mathcal{I}_i$. Set $\lmbd{}{*}:=\{\mathcal{v}=(\mathcal{v}_1,\dots,~\mathcal{v}_n): \mathcal{v}_i \in \lmbd{}{},~ n\in \N\}$ and for $\om{} \in \Om{}$ and $n \in \N$, set $\om{}\mid_n:=(\om{1},\dots,~\om{n})$. For $\mathcal{v} \in \lmbd{}{*}$, we define
    \begin{equation*}
        \mathcal{C}_n(\mathcal{v}):=\{\om{} \in \Om{}: \om{}\mid_n=\mathcal{v}\},
    \end{equation*}
    as the cylinder sets in $\Om{}$. Then
    there exists a Borel probability measure $\mathbf{P}$ on $\Om{}$ which assigns probability $\prod_{i=1}^n \zeta_{\mathcal{v}_{i}}$ to the cylinder set $\mathcal{C}_n(\mathcal{v})$. This probability measure is known as Bernoulli measure on the symbolic space $\Om{}$ and it is invariant and ergodic corresponding to the left shift map $\mathcal{L}(\om{1},~\om{2},~\om{3},\dots):=(\om{2},~\om{3},\dots)$ on $\Om{}$. Barnsley et al. \cite{BarnsleyHutchinson_v_dim_2012} and Fraser et al. \cite{FRASER_MIAO_TROSCHEIT_2018_Assouad} used this measure in the context of almost sure Hausdorff dimension and almost sure Assouad dimension (respectively) in the case of self-similar RIFSs. Hare et al. \cite{Sascha_Local}  used this concept of randomness to determine almost sure local dimensions in a self-similar setting. Also, Fraser et al. \cite{Fraser_Olsen_Multifractal} and Gui et al. \cite{Gui_Li_RandomCarpet_2008} used it in the self-affine setup.  For more on this, see \cite{Hambly1992,Olsen2011_randomSponge,olsen2017random_book,troscheit_2017_RandomGraph}. %\textcolor{red}{On the other hand, Dai et al. \cite{DAI20104_Random_Quantization} used a different concept of randomness to estimate the quantization dimension of a random self-similar measure.}
    
The extension from deterministic to $1$-variable random self-similar measures introduces a fundamental obstruction not present in the classic works of Graf and Luschgy (\cite{Graf2000FoundationsOQ}). In the deterministic setting, the measure $\mu$ is strictly self-similar, satisfying the identity $\mu(S_i(A)) = p_i \mu(A)$ for any Borel set $A$. In contrast, for the $1$-variable random measures $\mu_\omega$ considered here, the strict self-similarity is broken (see Proposition \ref{prop: mu om= sum over antichain}). Instead, the measure satisfies the relation $\mu_\omega(S_{\omega_1}(A)) = p_{\omega_1} \mu_{\mathcal{L}(\omega)}(A)$, where $\mathcal{L}(\omega)$ is the shift of the random sequence. Since $\mu_{\mathcal{L}(\omega)} \neq \mu_\omega$ in general, the standard inductive techniques for estimating the quantization dimension fail. To overcome this, we must lift the analysis to the symbolic space $\Omega$, utilizing the ergodicity of the shift map to control the asymptotic behavior of the error along typical realisations.

        Some additional definitions and notations are required to proceed further into the discussion. For $\om{} \in \Om{}$ and $n \in \N$, set $\lmbd{\om{}}{(n)}:=\{\sigma=(\sigma_1,\dots,~\sigma_n): \sigma_j \in \mathbf{I}_{\om{j}} ~\text{for}~j=1,\dots,~n\}$ and $\lmbd{\om{}}{*}:=\bigcup_{n \in \N} \lmomgp{n}$. Now for $\sigma \in \lmomgp{n}$, set $\abs{\sigma}:=n$ and for $j \leq n $, set $\sigma \mid_j:=(\sigma_1,\dots,~\sigma_j)$. For $n>1$, set $\sigma^-:=\sigma \mid_{(n-1)}$. Also, define 
        \begin{align*}
            S_{\sigma} &:= S_{\om{1},\sigma_1} \circ \dots \circ S_{\om{n},\sigma_n},~~c_{\sigma} := c_{\om{1},\sigma_1} \dots c_{\om{n},\sigma_n},\\
            p_{\sigma} &:= p_{\om{1},\sigma_1} \dots p_{\om{n},\sigma_n},~~
            E_{\sigma} := S_{\sigma}(X).
        \end{align*}
For $\sigma,~ \tau \in \lmbd{\om{}}{*}$, $\sigma$ is called a predecessor of $\tau$ if $\abs{\sigma}\leq \abs{\tau}$ and $\tau \mid_{\abs{\sigma}}=\sigma$ and it is denoted by $\sigma \preccurlyeq \tau$. On the other hand $\sigma$ is called a successor of $\tau$ if $\abs{\sigma}\geq \abs{\tau}$ and $\sigma \mid_{\abs{\tau}}=\tau$ and it is denoted by $\sigma \succcurlyeq \tau$. A strict symbol ($\sigma \prec \tau$ or $\sigma \succ \tau$) is used in the case of $\abs{\sigma} \neq \abs{\tau}$. Also for $\sigma\neq \tau$, they are called incomparable if neither $\sigma \prec \tau$ nor $\sigma \succ \tau$ holds. For $\sigma \in \lmomgp{n}$ and $j \in \N$, we
define 
\begin{equation*}
    \lmbd{j}{}(\sigma) :=\{\tau \in \lmomgp{n+j}: \sigma \prec \tau\}.
\end{equation*}

Here, we introduce a separation condition, which is a random analogue of the ESSC given in \cite{ZHU2008_canor_like}. This condition determines the extent to which various parts of the attractor intersect or overlap with each other. It is defined as follows.

\begin{definition}[UESSC] We say that the RIFS $\mathcal{I}$ satisfies the uniform extra strong separation condition (UESSC) if there exists $\beta >0$ such that for each $i \in \lmbd{}{}$, we have
    \begin{equation}\label{inq:original UESSC}
        \min\left\{\operatorname{dist}(E_{i,j},E_{i,j'}): j\neq j' \in \mathbf{I}_i\right\} \geq \beta \cdot \max \left\{\abs{E_{i,j}}: j \in \mathbf{I}_i\right\},
    \end{equation} where $E_{i,j}:=S_{i,j}(X)$, $\abs{E_{i,j}}$ denotes the diameter of the set $E_{i,j}$ and $\operatorname{dist}(A,B)$ denotes the usual distance between two sets $A,B \subset \Rl^d$ with respect to the usual metric on $\Rl^d$.
\end{definition}

Note that, if the RIFS $\mathcal{I}$ satisfies the UESSC then it follows that for any $\sigma \in \lmbd{\om{}}{*},$ 
\begin{equation}\label{inq:applicable UESSC}
    \min\left\{\operatorname{dist}(E_{\rho},E_{\tau}): \rho \neq \tau \in \Lambda_1(\sigma) \right\} \geq \beta \cdot \max \left\{\abs{E_{\tau}}: \tau \in \Lambda_1(\sigma)\right\}.
\end{equation}
This implication of UESSC will be employed more frequently in our proofs than the original formulation (\ref{inq:original UESSC}).

A central role in our analysis is played by the `expected pressure function' associated with the random system. The following proposition establishes the existence and uniqueness of the critical exponent $\kappa_r$, which will later be identified as the almost-sure quantization dimension.

\begin{proposition}\label{prop: existence of k_r}
    For $r>0$ there exists a unique $\kr >0 $ such that
    \begin{equation*}
        \sum_{j=1}^N \zeta_j \cdot \log \sum_{k \in \mathbf{I}_j}(p_{j,k}c_{j,k}^r)^\krr = 0,
    \end{equation*}
    equivalently
    \begin{equation}\label{eqn:def of kr prod}
        \prod_{j=1}^N \lr{[}{\sum_{k \in \mathbf{I}_j}(p_{j,k}c_{j,k}^r)^\krr }{]}^{~\zeta_j}=1.
    \end{equation}
\end{proposition}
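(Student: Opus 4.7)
The plan is to reduce the equation to a one-parameter problem through a monotone change of variables. For $t \in [0,1]$, define
\begin{equation*}
g(t) := \sum_{j=1}^{N} \zeta_j \log \Bigl( \sum_{k \in \mathbf{I}_j} (p_{j,k} c_{j,k}^r)^{t} \Bigr).
\end{equation*}
Since $t \mapsto t/(r+t)$ is a strictly increasing bijection from $[0,\infty)$ onto $[0,1)$, finding a unique $\kappa_r > 0$ with the desired equation is equivalent to finding a unique $t^{*} \in (0,1)$ with $g(t^{*}) = 0$, after which we set $\kappa_r = rt^{*}/(1-t^{*})$.

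For existence, I would evaluate $g$ at the endpoints. At $t = 0$ each inner sum equals $\operatorname{card}(\mathbf{I}_j) > 1$, so $g(0) = \sum_j \zeta_j \log \operatorname{card}(\mathbf{I}_j) > 0$ because the $\zeta_j$ are positive. At $t = 1$ the inner sum is $\sum_k p_{j,k} c_{j,k}^{r} \leq \sum_k c_{j,k}^{r} < 1$ since every $c_{j,k} \in (0,1)$, giving $g(1) < 0$. The function $g$ is continuous on $[0,1]$ because each $h_j(t) := \sum_k (p_{j,k} c_{j,k}^{r})^{t}$ is a finite sum of continuous positive functions of $t$, so the intermediate value theorem yields at least one zero $t^{*} \in (0,1)$.

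For uniqueness I would show that $g$ is strictly decreasing on $[0,1]$. Differentiating,
\begin{equation*}
g'(t) = \sum_{j=1}^{N} \zeta_j \cdot \frac{\sum_{k \in \mathbf{I}_j} (p_{j,k} c_{j,k}^{r})^{t} \log(p_{j,k} c_{j,k}^{r})}{\sum_{k \in \mathbf{I}_j} (p_{j,k} c_{j,k}^{r})^{t}},
\end{equation*}
which is a convex combination (over $k$, with positive weights, then averaged with the $\zeta_j$) of the numbers $\log(p_{j,k} c_{j,k}^{r})$. Because $p_{j,k} \leq 1$ and $c_{j,k} < 1$, we have $p_{j,k} c_{j,k}^{r} < 1$, hence every such logarithm is strictly negative, forcing $g'(t) < 0$ throughout $[0,1]$. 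Strict monotonicity rules out a second zero and completes the proof.

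The only delicate point is verifying that the argument of the logarithm is indeed less than one at $t=1$, i.e.\ that $\sum_k p_{j,k} c_{j,k}^{r} < 1$; this is immediate from $c_{j,k} < 1$ and $\sum_k p_{j,k} = 1$, so no real obstacle arises. The main conceptual step is simply recognizing that the substitution $t = \kappa_r / (r + \kappa_r)$ converts the implicit equation into one to which a straightforward IVT plus monotonicity argument applies.
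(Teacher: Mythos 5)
Your proof is correct and follows essentially the same approach as the paper: both make the change of variables $t = \kappa_r/(r+\kappa_r)$, evaluate the resulting function at the endpoints $0$ and $1$ to obtain a sign change, and establish uniqueness via strict monotonicity of the derivative. One small slip: the intermediate chain $\sum_k p_{j,k} c_{j,k}^{r} \leq \sum_k c_{j,k}^{r} < 1$ is not valid in general (the middle sum can exceed $1$ when there are many maps), but this is harmless since your closing paragraph supplies the correct estimate $\sum_k p_{j,k} c_{j,k}^{r} < \sum_k p_{j,k} = 1$, which is what the paper uses.
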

\begin{proof}
    For $z\geq 0$, define \[T(z)=\sum_{j=1}^N \zeta_j \cdot \log \sum_{k \in \mathbf{I}_j}(p_{j,k}c_{j,k}^r)^z.\]
Clearly, $T$ is differentiable for $z\geq0$ and 
\[T'(z)=\sum_{j=1}^N \zeta_j \cdot \frac{\sum_{k \in \mathbf{I}_j}(p_{j,k}c_{j,k}^r)^z \cdot \log (p_{j,k}c_{j,k}^r)}{\sum_{k \in \mathbf{I}_j}(p_{j,k}c_{j,k}^r)^z} <0, \]
since $0<p_{j,k}c_{j,k}^r < 1$ for $j=1,\dots,N.$\\

Note that $T(0)=\sum_{j=1}^N \zeta_j \cdot \log(\operatorname{card}(\mathbf{I}_j))>0$ as $\operatorname{card}(\mathbf{I}_j)>1$ for $j=1,\dots,N.$ Also, $T(1)=\sum_{j=1}^N \zeta_j \cdot \log \sum_{k \in \mathbf{I}_j}(p_{j,k}c_{j,k}^r)<0$ as $\sum_{k \in \mathbf{I}_j}(p_{j,k}c_{j,k}^r)<\sum_{k \in \mathbf{I}_j}p_{j,k}=1$ for $j=1\dots,N.$\\

So there exists unique $0<z_0<1$ such that $T(z_0)=0$. Setting $\kr:=\frac{rz_0}{1-z_0}$ the result follows.
\end{proof}

With the necessary groundwork in place, we can now formally state the main theorem of this paper. See Section \ref{sec:proofs} for proofs.
 
\begin{theorem}\label{thm: main}
    Let $\mathcal{I}$ be the RIFS as defined in (\ref{eq: self-similar RIFS}), consisting of similarity maps and $\mu_{\om{}}$ be the $1$-variable random self-similar measure given in (\ref{def: Random Hom. self-similar measure}). Let $r>0$ and $\kr$ be the unique positive real number given in Proposition \ref{prop: existence of k_r}. If $\mathcal{I}$ satisfies the UESSC then for $\mathbf{P}$-almost all $\om{} \in \Om{}$, we have
        \begin{equation*}
            D_r(\mu_{\om{}})=\kr.
        \end{equation*}
        
        % \item Along with the UESSC if $\mathcal{I}$ satisfies the SUOSC then for $\mathbf{P}$-almost all $\om{} \in \Om{}$, we have
        % \begin{equation*}
        %      \underline{Q}_{r}^{\kr}(\mu_{\om{}})>0.
        % \end{equation*}
\end{theorem}

\begin{remark}
    It is worth highlighting here the thermodynamic interpretation of our main result. In the deterministic setting, the quantization dimension is the unique zero of the topological pressure function $P(t) = \log \sum_{i} (p_i c_i^r)^t$. Our result establishes that for $1$-variable random self-similar measures, the quantization dimension $\kappa_r$ is determined by the zero of the expectation of the topological pressure:
    $$\mathbb{E}_{\mathbf{P}}\left[ \log \sum_{j \in I_{\omega_1}} (p_{\omega_1, j} c_{\omega_1, j}^r)^{\frac{s}{r+s}} \right] = 0,~\text{where}~s>0.$$
    Here, the ergodicity of the Bernoulli measure on the symbolic space ensures that the geometric fluctuations average out almost surely, allowing for a precise dimensional formula that generalises the deterministic case in a natural yet non-trivial thermodynamic framework.
\end{remark}

\begin{remark}
    For $n \in \N$, we write $\mathcal{L}^n=\mathcal{L} \circ \mathcal{L} \circ \dots \circ \mathcal{L}$, where the composition is taken $n$-times and $\mathcal{L}^0=\mathcal{L}.$
\end{remark}

\begin{remark}
For the remainder of this paper, unless explicitly stated otherwise, $\om{}=(\omega_1,\omega_2,\dots)$ is an arbitrary element in $\Om{}$ and the phrase  `almost all $\om{}$' will refer to `$\mathbf{P}$-almost all $\om{} \in \Om{}$'.
\end{remark}
We now present an example of a RIFS that satisfies the separation condition UESSC.

\begin{example}\label{exmpl: 1st}
Let $X:=[0,1]$ and for $i,j \in \{1,2\}$, define $S_{i,j}:X \rightarrow \Rl$ as 
\begin{align*}
    S_{1,1}(x):=x/5+1/5,~ S_{1,2}(x):=x/5+3/5;\\ S_{2,1}(x):=x/5+1/6,~S_{2,2}(x):=x/5+3/6.
\end{align*}
Now, define IFSs $\mathcal{I}_1:=\{S_{1,1},S_{1,2}\}$, $\mathcal{I}_2:=\{S_{2,1},S_{2,2}\}$ and RIFS $\mathcal{I}:=\{\mathcal{I}_1,\mathcal{I}_2\}$. 
% For $i=1,2$, we have $ {\bigcup_{j\in\mathbf{I}_i}S_{i,j}}(X)\subset(0,1),$ where the unions are disjoint. Also by (\ref{def: Random Attractor}), $F_{\om{}}\subset\bigcup_{j\in\mathbf{I}_{\om{1}}}S_{\om{1},j}(X)$. So by taking $U=(0,1)$, we have $F_{\om{}}\cap U\neq \emptyset$ for all $\om{} \in \Om{}$.  Hence $\mathcal{I}$ satisfies the SUOSC.
We have $\operatorname{dist}(E_{1,1},E_{1,2})=1/5,~\operatorname{dist}(E_{2,1},E_{2,2})=2/15$ and $\abs{E_{i,j}}=1/5$ for all $i,j$. Hence $\mathcal{I}$ also satisfies the UESSC if we take $\beta=1/3$.
\end{example}

    \section{Existence and Uniqueness of $1$-variable Random Self-similar 
measures}\label{sec:Invariant Measure}
Let $\mathcal{M}(X)$ be the set of all Borel probability measures on $X$. Fix $\nu \in \mathcal{M}(X)$. For $n \in \mathbb{N}$, define
\begin{align*}
    \mu_{\omega, \nu}^{(n)}&:=\sum_{i_1 \in \mathbf{I}_{\omega_1},
    \dots, i_n \in \mathbf{I}_{\omega_n}} (p_{\omega_1,i_1}\dots p_{\omega_n,i_n}) \nu \circ (S_{\omega_1,i_1} \circ  \dots \circ S_{\omega_n,i_n})^{-1}\\
    &= \sum_{\sigma \in \lmomgp{n}} p_{\sigma}(\nu \circ S_{\sigma}^{-1}).
\end{align*}
Clearly, $\mu_{\omega, \nu}^{(n)} \in \mathcal{M}(X)$. The measure $\mu_{\om{}}$ is defined as the unique limit of the measures $\mu_{\omega, \nu}^{(n)}$ with respect to the Monge-Kantorovich metric $L$ on $\mathcal{M}(X)$, which is defined by
\begin{align*}
    L(\mu,\nu):=\sup\bigg\{ &\int \phi ~d\mu - \int \phi~ d\nu \mid \phi: X \to \mathbb{R},~ \\
    &\operatorname{Lip}(\phi)=\sup_{x \neq y}\frac{d(\phi(x),\phi(y))}{d(x,y)}~\leq 1 \bigg\}.
\end{align*}
Now, we show that for $\nu \in \mathcal{M}(X)$, $\{\mu_{\omega, \nu}^{(n)}\}_n$ is a Cauchy sequence in $(\mathcal{M}(X),L)$. 

Before continuing, we define a necessary notation here: for $n,~n'\in \N$ and $\sigma \in \lmbd{\om{}}{(n)},~\sigma' \in \lmbd{\mathcal{L}^n(\om{})}{(n')}$, define $\sigma \sigma':= (\sigma_1,\dots,~\sigma_n,~\sigma'_1,\dots,~\sigma'_{n'}) \in \lmomgp{n+n'}$.

Let $\phi: X \to \mathbb{R}$ be such that $\operatorname{Lip}(\phi)\leq 1$ and $j \in \mathbb{N}$. Then we have  
\begin{align*}
     &\int \phi~d\mu_{\omega, \nu}^{(n+j)} - \int \phi~d\mu_{\omega, \nu}^{(n)} \\
     &= \sum_{\tau \in \lmomgp{n+j}} p_{\tau} \int \phi~ d(\nu \circ S_{\tau}^{-1}) - \sum_{\sigma \in \lmomgp{n}} p_{\sigma} \int \phi~ d(\nu \circ S_{\sigma}^{-1})\\
     &= \sum_{\sigma \in \lmomgp{n}} p_{\sigma}\left\{\sum_{\sigma' \in \lmbd{\mathcal{L}^{n}(\om{})}{(j)}} p_{\sigma'}  \left( \int \phi~ d \left(\nu \circ S_{\sigma \sigma'}^{-1} \right)  - \int \phi~ d \left(  \nu \circ S_{\sigma}^{-1} \right)\right)\right\}\\
     &= \sum_{\sigma \in \lmomgp{n},~\sigma' \in \lmbd{\mathcal{L}^{n}(\om{})}{(j)}} p_{\sigma \sigma'}\left\{ \int \phi~ d \left(\nu' \circ S_{\sigma}^{-1} \right)  - \int \phi~ d \left(  \nu \circ S_{\sigma}^{-1} \right)\right\},
\end{align*}
where $\nu'=\nu \circ S_{\sigma'}^{-1}.$

Now,
\begin{align*}
    &\sum_{\sigma \in \lmomgp{n},~\sigma' \in \lmbd{\mathcal{L}^{n}(\om{})}{(j)}} p_{\sigma \sigma'}\left\{ \int \phi~ d \left(\nu' \circ S_{\sigma}^{-1} \right)  - \int \phi~ d \left(  \nu \circ S_{\sigma}^{-1} \right)\right\}\\
    &= \sum_{\sigma \in \lmomgp{n},~\sigma' \in \lmbd{\mathcal{L}^{n}(\om{})}{(j)}} p_{\sigma \sigma'}c_{\sigma} \lr{(}{\int (c_{\sigma}^{-1} \cdot \phi \circ S_{\sigma}^{})d(\nu')-\int (c_{\sigma}^{-1}\cdot \phi \circ S_{\sigma}^{})d(\nu)}{)} \\
    &\leq (c_{\max})^n \sum_{\sigma \in \lmomgp{n},~\sigma' \in \lmbd{\mathcal{L}^{n}(\om{})}{(j)}} p_{\sigma\sigma'
    } L(\nu',\nu),
\end{align*}
since $\operatorname{Lip}(c_{\sigma}^{-1}\cdot \phi \circ S_{\sigma}^{})\leq 1$ and we write $c_{\max}:=\max\{c_{i,j}:j \in \mathbf{I}_{i},\\
~i\in \lmbd{}{}\}$~ $(\Rightarrow0<c_{\max}<1)$. Hence
\begin{equation}\label{inq: inv. measure cauchy 1}
    \int \phi~d\mu_{\omega, \nu}^{(n+j)} - \int \phi~d\mu_{\omega, \nu}^{(n)}\leq (c_{\max})^n \sum_{\sigma \in \lmomgp{n},~\sigma' \in \lmbd{\mathcal{L}^{n}(\om{})}{(j)}} p_{\sigma\sigma'} L(\nu',\nu).
\end{equation}
Now for $\sigma'=(\sigma'_{n+1},\dots, \sigma'_{n+j}) \in \lmbd{\mathcal{L}^{n}(\om{})}{(j)}$, we have
\begin{align*}
    &L(\nu,\nu')\\
    &= L(\nu, \nu \circ S_{\sigma'}^{-1})\\
    &\leq L(\nu,\nu \circ S_{\omega_{n+1},\sigma'_{n+1}}^{-1})+ L(\nu \circ S_{\omega_{n+1},\sigma'_{n+1}}^{-1}, \nu \circ (S_{\omega_{n+1},\sigma'_{n+1}}\circ S_{\omega_{n+2},\sigma'_{n+2}})^{-1})\\&+\dots + L(\nu \circ (S_{\omega_{n+1},\sigma'_{n+1}}\circ \dots \circ S_{\omega_{n+j-1},\sigma'_{n+j-1}})^{-1}, \\
    &\nu \circ (S_{\omega_{n+1},\sigma'_{n+1}}\circ \dots  \circ S_{\omega_{n+j},\sigma'_{n+j}})^{-1})\\
    & \leq L(\nu,\nu \circ S_{\omega_{n+1},\sigma'_{n+1}}^{-1})+c_{max} L(\nu,\nu \circ S_{\omega_{n+2},\sigma'_{n+2}}^{-1})+ \dots \\
    &+ c_{max}^{j-1} L(\nu,\nu \circ S_{\omega_{n+j},\sigma'_{n+j}}^{-1})\\
    & \leq (1+ c_{\max}+\dots+c_{\max}^{j-1}) A_{\nu} \leq \frac{1}{1-c_{max}}A_{\nu},
    \end{align*} 
    where $A_\nu=\max\{L(\nu,\nu \circ S_{i,j}^{-1}): j \in \mathbf{I}_i,~i \in \lmbd{}{}\}$. So, $0\leq A_{\nu}<\infty$.\\

Putting this in $(\ref{inq: inv. measure cauchy 1})$, we get
\begin{align*}
    &\int \phi~d\mu_{\omega, \nu}^{(n+j)} - \int \phi~d\mu_{\omega, \nu}^{(n)}\\
    &\leq \frac{c_{\max}^n~ A_{\nu}}{1-c_{\max}} \sum_{\sigma \in \lmomgp{n},~\sigma' \in \lmbd{\mathcal{L}^{n}(\om{})}{(j)}} p_{\sigma \sigma'} = \frac{A_\nu}{1-c_{\max}} c_{\max}^n.
\end{align*}
Since $0<c_{max}<1$, for any $\epsilon > 0$, $j \in \mathbb{N}$ and large enough $n \in \mathbb{N}$, we have 
\begin{equation*}
    \int \phi~d\mu_{\omega, \nu}^{(n+j)} - \int \phi~d\mu_{\omega, \nu}^{(j)} < \epsilon \Rightarrow L(\mu_{\omega,\nu}^{(n+j)}, \mu_{\omega,\nu}^{(n)})\leq \epsilon.
\end{equation*}
Hence $\{\mu_{\omega, \nu}^{(n)}\}_n$ is Cauchy in $(\mathcal{M}(X),L)$. As $(\mathcal{M}(X),L)$ is complete \cite{Hutschinson_1981_Frac_Self}, there exists $\mu_{\omega,\nu} \in \mathcal{M}(X)$ such that $\{\mu_{\omega, \nu}^{(n)}\}_n$ converges to $\mu_{\omega,\nu}$.

Next, we show that for any $\nu \in \mathcal{M}(X)$, the sequence $\{\mu_{\omega, \nu}^{(n)}\}_n$ always converges to a common limit $\mu_\omega$, independent of $\nu$. To show this, let us take $\nu, \nu' \in \mathcal{M}(X)$. We will show that $L(\mu_{\omega,\nu}~,~\mu_{\omega,\nu'})=0$ and hence $\mu_{\omega,\nu}=\mu_{\omega,\nu'}$.\\

Since $\mu_{\omega,\nu}^{(n)} \overset{n}{\rightarrow} \mu_{\omega,\nu}$, we have $\int \phi~ d\mu_{\omega,\nu}^{(n)} \overset{n}{\rightarrow} \int \phi~d\mu_{\omega,\nu}$ \cite{Hutschinson_1981_Frac_Self}. That is $\int \phi ~d(\lim_{n \to \infty}\mu_{\omega,\nu}^{(n)})=\lim_{n \to \infty}\int \phi ~ d\mu_{\omega,\nu}^{(n)}$. So, we have

\begin{align}\label{uniqueness 1}
  \nonumber &\int \phi~d\mu_{\omega, \nu} - \int \phi~d\mu_{\omega, \nu'}\\
  \nonumber &= \int \phi ~d(\lim_{n \to \infty}\mu_{\omega,\nu}^{(n)})- \int \phi ~d(\lim_{p \to \infty}\mu_{\omega,\nu'}^{(p)})\\ &= \lim_{n \to \infty} \left[\int \phi ~d(\mu_{\omega,\nu}^{(n)})-\int \phi ~d(\mu_{\omega,\nu'}^{(n)})\right].
    \end{align}
Now,
\begin{align}\label{uniqueness 2}
  \nonumber  &\int \phi ~d(\mu_{\omega,\nu}^{(n)})-\int \phi ~d(\mu_{\omega,\nu'}^{(n)}) 
    \\
    \nonumber &= \sum_{\sigma \in \lmbd{\om{}}{(n)}} p_{\sigma}c_{\sigma} \left[  \int (c_{\sigma}^{-1} \cdot \phi \circ S_{\sigma}^{})~ d\nu- \int(c_{\sigma}^{-1} \cdot \phi \circ S_{\sigma}^{}) ~d\nu' \right]\\
    &\leq c_{max}^n \sum_{\sigma \in \lmbd{\om{}}{(n)}} p_{\sigma} L(\nu,\nu') \leq c_{max}^n~L(\nu,\nu').
\end{align}
Combining (\ref{uniqueness 1}) and (\ref{uniqueness 2}), we get
\begin{align*}
    \int \phi~d\mu_{\omega, \nu} - \int \phi~d\mu_{\omega, \nu'}\leq  \left(\lim_{n \to \infty} c_{max}^n\right)L(\nu,\nu')=0.
\end{align*}
Since this is true for all such $\phi$, we conclude that $L(\mu_{\omega, \nu},\mu_{\omega, \nu'})=0$.
Hence we can write, for any $\nu \in \mathcal{M}(X)$
\begin{equation}\label{def:mu_omega}
    \mu_{\omega}:= \lim_{n \to \infty} \mu_{\omega,\nu}^{(n)}=\lim_{n \to \infty}\sum_{\sigma \in \lmomgp{n}} p_{\sigma}(\nu \circ S_{\sigma}^{-1}).
\end{equation}
We call this $\mu_\omega$ the \textit{$1$-variable random self-similar measure} corresponding to $\omega$. In addition, it's worthwhile to note that the $1$-variable random measures fall under the broader category of $V$-variable measures, studied extensively in \cite{BARNSLEY2008_V-variable}. 

For the convenience of proving some of the important results in Section 
\ref{sec:proofs}, we easily derive $\mu_{\om{}}$ from (\ref{def:mu_omega}), as 
\begin{equation}\label{mu def as sascha}
    \mu_{\om{}}=\sum_{i_1 \in \mathbf{I}_{\om{1}}}p_{\om{1},i_1} (\mu_{\mathcal{L}(\om{})} \circ S_{\om{1},i_1}^{-1})
\end{equation}
  and hence for any $n \in \N$
\begin{equation}\label{mu def sascha (n)}
    \mu_{\om{}}=\sum_{\sigma \in \lmbd{\om{}}{(n)}} p_{\sigma} (\mu_{\mathcal{L}^n(\om{})} \circ S_{\sigma}^{-1}).
\end{equation}
In \cite{Sascha_Local},  $\mu_{\om{}}$ has been considered of the form (\ref{mu def as sascha}) to study its multifractal analysis.
%In the sequel, we consider $\mu_{\om{}}$ as given in ($\ref{mu def sascha (n)}$).

\begin{definition}
     A finite set $\Gamma_{\om{}} \subset \lmbd{\om{}}{*}$ is called a finite maximal antichain (FMA) if any $\sigma \neq \sigma' \in \Gamma_{\om{}}$ are incomparable and any $\tau \in \lmbd{\om{}}{*}$ is comparable with some $\sigma \in \Gamma_{\om{}}.$
\end{definition}

The following proposition shows that although $\mu_{\om{}}$ may not always be strictly self-similar, some sort of partial self-similarity is still involved.
\begin{proposition}\label{prop: mu om= sum over antichain}
    For any finite maximal antichain $\gmomg \subset \lmbd{\om{}}{*}$, we have
    \begin{equation}\label{eqn: mu_omega on FMA}
        \mu_{\om{}}=\sum_{\sigma \in \gmomg} p_{\sigma} (\mu_{\mathcal{L}^{\abs{\sigma}}(\om{})} \circ S_{\sigma}^{-1}).
    \end{equation}
    \end{proposition}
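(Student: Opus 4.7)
The plan is to reduce the identity for an arbitrary FMA to the one already established for the uniform antichain $\lmomgp{N}$, where $N := \max\{\abs{\sigma} : \sigma \in \gmomg\}$. Identity (\ref{mu def sascha (n)}) with $n = N$ gives
\begin{equation*}
    \mu_{\om{}} = \sum_{\eta \in \lmomgp{N}} p_\eta (\mu_{\mathcal{L}^N(\om{})} \circ S_\eta^{-1}),
\end{equation*}
so the goal is to reorganize this sum into one indexed by $\gmomg$.

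The combinatorial heart of the argument is the partition
\begin{equation*}
\lmomgp{N} = \bigsqcup_{\sigma \in \gmomg} A_\sigma, \quad \text{where} \quad A_\sigma := \{\eta \in \lmomgp{N} : \sigma \preccurlyeq \eta\}.
\end{equation*}
Disjointness holds because distinct elements of $\gmomg$ are incomparable and therefore cannot share a common successor (if $\sigma, \sigma' \in \gmomg$ both precede $\eta$ and $\abs{\sigma} \leq \abs{\sigma'}$, then $\sigma' \mid_{\abs{\sigma}} = \eta \mid_{\abs{\sigma}} = \sigma$, contradicting incomparability). Exhaustion uses maximality: each $\eta \in \lmomgp{N}$ is comparable with some $\sigma \in \gmomg$, and since $\abs{\eta} = N \geq \abs{\sigma}$, necessarily $\sigma \preccurlyeq \eta$. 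Moreover, every $\eta \in A_\sigma$ can be uniquely written as $\eta = \sigma \tau$ for some $\tau \in \lmbd{\mathcal{L}^{\abs{\sigma}}(\om{})}{(N - \abs{\sigma})}$, with the boundary case $\abs{\sigma} = N$ corresponding to $\tau$ empty and $\eta = \sigma$.

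Substituting this partition into the display above and using the multiplicative relations $p_{\sigma\tau} = p_\sigma p_\tau$ and $S_{\sigma\tau} = S_\sigma \circ S_\tau$ (where $p_\tau$, $S_\tau$ are defined with respect to the shifted sequence $\mathcal{L}^{\abs{\sigma}}(\om{})$), I factor out $p_\sigma$ and $S_\sigma^{-1}$ to obtain
\begin{equation*}
    \mu_{\om{}} = \sum_{\sigma \in \gmomg} p_\sigma \left(\left[\sum_{\tau \in \lmbd{\mathcal{L}^{\abs{\sigma}}(\om{})}{(N - \abs{\sigma})}} p_\tau (\mu_{\mathcal{L}^N(\om{})} \circ S_\tau^{-1})\right] \circ S_\sigma^{-1}\right).
\end{equation*}
The inner bracket is precisely the instance of (\ref{mu def sascha (n)}) applied to the shifted sequence $\mathcal{L}^{\abs{\sigma}}(\om{})$ at level $N - \abs{\sigma}$ (noting $\mathcal{L}^{N-\abs{\sigma}}(\mathcal{L}^{\abs{\sigma}}(\om{})) = \mathcal{L}^N(\om{})$), so it equals $\mu_{\mathcal{L}^{\abs{\sigma}}(\om{})}$. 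This gives the desired identity (\ref{eqn: mu_omega on FMA}).

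The main obstacle is purely bookkeeping rather than conceptual: one must carefully justify the partition of $\lmomgp{N}$ and handle the boundary case $\abs{\sigma} = N$ correctly (where $\tau$ is empty and contributes the trivial factors $p_\tau = 1$, $S_\tau = \mathrm{id}$).
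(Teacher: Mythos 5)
Your proof is correct and follows essentially the same route as the paper: both lift to the uniform level $N = \max_{\sigma \in \gmomg} \abs{\sigma}$, use the FMA property to partition $\lmomgp{N}$ according to which $\sigma \in \gmomg$ each word extends, and then apply identity (\ref{mu def sascha (n)}) (at the top level and again at the shifted level $N - \abs{\sigma}$). The paper organizes the bookkeeping by first grouping $\gmomg$ by word-length and pushing each group forward to level $n_k$, whereas you run the same decomposition cell-by-cell over $\gmomg$; the two presentations are equivalent.
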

    \begin{proof}
    For $n \in \N$, set $\Gamma_{n}:=\{\sigma \in \gmomg : \abs{\sigma}=n\}.$ Since $\gmomg$ is a FMA there exists $n_1<\dots < n_k \in \N$ such that \[\displaystyle{\gmomg=\bigcup_{i=1}^k\Gamma_{n_i}}\] for some $k \in \N$.
    
    For $i=1,\dots,k$ set $$\Gamma_{n_i}':=\left\{\tau \in \Lambda_{\om{}}^{(n_k)}: \sigma \preccurlyeq \tau~\text{for some}~\sigma 
    \in \Gamma_{n_i} \right\}.$$ Then using $(\ref{mu def sascha (n)})$, we deduce
    \begin{equation}\label{prop2.2 (1)}
        \sum_{\sigma \in \Gamma_{n_i}}  p_{\sigma} (\mu_{\mathcal{L}^{n_i}(\om{})} \circ S_{\sigma}^{-1})=\sum_{\sigma \in \Gamma_{n_i}'}  p_{\sigma} (\mu_{\mathcal{L}^{n_k}(\om{})} \circ S_{\sigma}^{-1}).
    \end{equation}
    % For $i=2,\dots,k$, $\Gamma_{n_i}^{(n_1)}:=\{\tau \in \lmomgp{n_1}: \tau \prec \sigma ~\text{for some}~ \sigma \in \Gamma_{n_i} \}$ and $\Gamma_{n_1}^{(n_1)}:=\Gamma_{n_1}.$
% Note that, since $\gmomg$ is a maximal antichain, \[\lmomgp{n_1}=\bigcup_{i=1}^k \Gamma_{n_i}^{(n_1)}.\]
Now
    \begin{align*}
        &\sum_{\sigma \in \gmomg} p_{\sigma} (\mu_{\mathcal{L}^{\abs{\sigma}}(\om{})} \circ S_{\sigma}^{-1})\\
        &=\sum_{i=1}^k \sum_{\sigma \in \Gamma_{n_i}}  p_{\sigma} (\mu_{\mathcal{L}^{n_i}(\om{})} \circ S_{\sigma}^{-1})\\
        &= \sum_{i=1}^k \sum_{\tau \in \Gamma_{n_i}'}  p_{\tau} (\mu_{\mathcal{L}^{n_k}(\om{})} \circ S_{\tau}^{-1})~~~~(\text{by (\ref{prop2.2 (1)})})\\
        &= \sum_{\tau \in \lmomgp{n_k}}  p_{\tau} (\mu_{\mathcal{L}^{n_k}(\om{})} \circ S_{\tau}^{-1})~~~~(\text{since}~\Gamma_{\om{}}~\text{is a FMA})\\
        &=\mu_{\om{}}~~~~(\text{by (\ref{mu def sascha (n)})}).
    \end{align*}
    Hence, the result follows.
\end{proof}

\section{Preliminary facts}\label{sec:def not}

First, we introduce `periodic words' in $\Om{}$. For $n \in \mathbb{N}$, write
    \[\displaystyle{\omega_n^p:=(\omega_1,\omega_2,\dots,\omega_n,\omega_1,\omega_2,\dots,\omega_n,\omega_1, \omega_2,\dots,\omega_n,\dots) \in \Omega}.\] Here the `$p$' in $\om{n}^p$ highlights the periodic nature of $\om{n}^p$. We will refer to $\omega_n^p$ as a periodic word with period $n$. Various results emerging from this periodicity can be established in suitable contexts. Some of them are given below.
    \begin{enumerate}
        \item If we define a metric $\tilde{d}$ on $\Om{}$ by $\tilde{d}(\om{},\om{}'):=2^{-(\om{} \wedge \om{}')}$, where $\om{} \wedge \om{}':=\max\{k \in \N: \om{j}=\om{j}',~j=1,\dots,~k\}$ and $\tilde{d}(\om{},\om{}')=0$ if $\om{}=\om{}'$, then $\om{n}^p \overset{n}{\to} \om{}$ in $(\Om{},\tilde{d})$.
        \item  With respect to the metric `$L$' defined in Section $\ref{sec:Invariant Measure}$, $\mu_{\om{n}^p} \overset{n}{\to} \mu_{\om{}}.$
        \item For $k \in \N$, $V_{k,r}(\mu_{\om{n}^p}) \overset{n}{\to} V_{k,r}(\mu_{\om{}}),$ where the convergence is uniform over $k$.
        \item For almost all $\om{}$, $D_r(\mu_{\om{n}^p}) \overset{n}{\to}D_r(\mu_{\om{}})$, provided $\mathcal{I}$ satisfies the UESSC.
        \end{enumerate}
        
        The above first three statements are easy to verify. The fourth one can be justified from Propositions \ref{lemma:Strng law of large numbers} and \ref{prop: D_r=k_r}.
        
        Here, we show that $\mu_{\om{n}^p}$~, the $1$-variable random measure corresponding to $\om{n}^p$ is a self-similar measure for the IFS $\mathbf{I}_{\om{},n}:=\{S_{\sigma}: \sigma \in \lmomgp{n} \}.$ To see this, assign probability $p_{\sigma}>0$ to $S_{\sigma}$. Define $\theta:\mathcal{M}(X) \to \mathcal{M}(X)$ by 
        \begin{equation*}
            \theta(\nu)=\sum_{\sigma \in \lmomgp{n}} p_{\sigma}(\nu \circ S_{\sigma}^{-1}),
        \end{equation*}
        where $\nu \in \mathcal{M}(X).$ Then by \cite{Hutschinson_1981_Frac_Self}, we know that there exists a self-similar measure $\nu_n \in \mathcal{M}(X)$ such that $\theta(\nu_n)=\nu_n$ and for any $\nu$, $\theta^k(\nu)\overset{k}{\to} \nu_n$ (in the $L$-metric), where \begin{equation*}
        \displaystyle{\theta^k(\nu)=\sum_{\sigma^1,\dots,~\sigma^k \in \lmbd{\om{}}{(n)}} (p_{\sigma^1}\dots p_{\sigma^k})\nu \circ (S_{\sigma^1}\circ \dots \circ S_{\sigma^k})^{-1}}.
        \end{equation*}
        Now following notations of Section \ref{sec:Invariant Measure}, we observe that for $j \in \N$, we have
        \begin{equation*}
            \mu_{\om{n}^p,\nu}^{(jn)}=\sum_{\sigma \in \lmbd{\om{n}^p}{(jn)}} p_{\sigma}(\nu \circ S_{\sigma}^{-1})
        \end{equation*}
        and for $\sigma \in \lmbd{\om{n}^p}{(jn)}$, we have 
        \begin{align*}
        p_{\sigma}=p_{\sigma^1}\dots p_{\sigma^j}~~\text{and}~~S_{\sigma}=S_{\sigma^1}\circ \dots \circ S_{\sigma^j},
        \end{align*}
    for some $\sigma^1,\dots,~\sigma^j \in \lmbd{\om{}}{n}$. This implies 
    \begin{equation*}
        \mu_{\om{n}^p,\nu}^{(jn)}=\sum_{\sigma^1,\dots,~\sigma^j \in \lmbd{\om{}}{n}} (p_{\sigma^1}\dots p_{\sigma^j}) \nu \circ (S_{\sigma^1}\circ \dots \circ S_{\sigma^j})^{-1}=\theta^j(\nu).
    \end{equation*}
   It follows that $\mu_{\om{n}^p,\nu}^{(jn)} \overset{j}{\to} \nu_n$. Hence by results in Section \ref{sec:Invariant Measure}, we have $\mu_{\om{n}^p}=\nu_n.$

    Now, if $\mathcal{I}$ satisfies the UESSC, then it is clear that the IFS $\mathbf{I}_{\om{},n}$ satisfies the strong separation condition (SSC), that is $S_{\sigma}(F_{\om{n}^p}),~\sigma \in \lmomgp{n}$, are disjoint, where $F_{\om{n}^p}$ is the attractor of the IFS $\mathbf{I}_{\om{},n}$. In that case, we know by \cite{Graf2000FoundationsOQ} that $D_r(\mu_{\om{n}^p})$ exists and it is given by 
    \begin{equation}\label{eqn:qnt dim of periodic}
        \pcr{\lmomgp{n}}{D_r(\mu_{\om{n}^p})}=1.
    \end{equation}
    For the convenience of notation, we will sometimes write $s_{n,r}=s_{n,r}(\om{})$ instead of $D_r(\mu_{\om{n}^p})$. We will use (\ref{eqn:qnt dim of periodic}) and other consequences of the periodic nature of $\om{n}^p$ in some of the upcoming results.

\section{Proofs}\label{sec:proofs}

\subsection{Proofs of necessary lemmas}\label{sec: proofs of lemmas}
In this section, we prove lemmas that will be used to prove our main theorem. Some of them are generalisations of lemmas proved by Zhu in \cite{ZHU2008_canor_like,ZHU_2012_MultiScaleMoran}. We begin by assuming that the RIFS $\mathcal{I}$ satisfies the UESSC for some $\beta >0$. 

Note that for $\sigma \in \lmomgp{n}$ the diameter of $E_{\sigma}$ is given by
\[\abs{E_{\sigma}}=(\prod_{j=1}^n c_{\om{j},\sigma_j}) \cdot \abs{X}.\] 
Without loss of generality, we assume that $\abs{X}=1$. So $\abs{E_{\sigma}}=c_{\sigma}$. Also, from (\ref{mu def sascha (n)}), one can deduce that \[\mu_{\om{}}(E_{\sigma})=(\prod_{j=1}^n p_{\om{j},\sigma_j})=p_{\sigma}.\]
Now, we define a subset $\gmomgn{n}$ of $\lmbd{\om{}}{*}$, which will play a crucial role in our theory. For $n \in \mathbb{N}$, define  
\begin{equation*}
    \gmomgn{n}:=\{\sigma \in \lmbd{\om{}}{*}: p_{\sigma^-}c_{\sigma^-}^r\geq n^{-1}(pc^r)>p_{\sigma}c_{\sigma}^r\} \subset \Lambda_{\omega}^*~,
\end{equation*} 
where $p:=\min\{p_{i,j}:j\in \mathbf{I}_i,~ i \in \lmbd{}{}\}$ and $c:=\min\{c_{i,j}:j\in \mathbf{I}_i,~ i \in \lmbd{}{}\}$. Note that $pc^r>0$.

Since $(p_{\sigma}c_{\sigma}^r) \to 0$ as $\abs{\sigma} \to \infty$, for $n \in \N$ there exist $\sigma \in \lmbd{\om{}}{*}$ such that $(p_{\sigma}c_{\sigma}^r)< pc^r/n \leq (p_{\sigma^-}c_{\sigma^-}^r)$. Hence for every $n$, $\Gamma_{\om{},n}$ is non-empty. Also, from the definition of $\gmomgn{n}$, it is evident that it is a finite maximal antichain. 

Now, let $t_{n,r}:=t_{n,r}(\omega)$ be the unique positive real number given by 
    \begin{equation}\label{eqn:tnr}
         \displaystyle{\sum_{\sigma \in \Gamma_{\om{},n}}\left(p_{\sigma}c_{\sigma}^r\right)^{\frac{t_{n,r}}{r+t_{n,r}}}=1},
    \end{equation}
    where existence and uniqueness of $t_{n,r}$ can be proved by arguments similar to Proposition $\ref{prop: existence of k_r}$. Also, from (\ref{eqn:tnr}) it follows that the sequence $\{t_{n,r}(\om{})\}_{n\geq1}$ is bounded, hence we define  \[\displaystyle{\underline{t}_r=\underline{t}_r(\om{}):=\liminf_{n \to \infty} t_{n,r}}~~~\text{and}~~~\displaystyle{\overline{t}_r=\overline{t}_r(\om{}):=\limsup_{n \to \infty} t_{n,r}}.\]
    Later (in Proposition \ref{lemma:Dr=tr}), we will see that if UESSC holds then $\underline{t_r}(\om{})=\underline{D}_r(\mu_{\om{}})$ and $\overline{t_r}(\om{})=\overline{D}_r(\mu_{\om{}})$.
    
    Also, set $l_{1n}=l_{1n}(\om{}):=\min_{\sigma \in \gmomgn{n}} \abs{\sigma},~l_{2n}=l_{2n}(\om{}):=\max_{\sigma \in \gmomgn{n}} \abs{\sigma}$, $\Phi_{\om{},n}:=$ $\operatorname{card}(\Gamma_{\om{},n})$ and for every $s>0$, set 
    \begin{align*}
    &\underline{P}_{r}^s(\mu_{\omega}):= \underset{n \to \infty}{\liminf}~ \Phi_{\om{},n}^{1/s} V_{\Phi_{\om{},n},r}^{1/r}(\mu_{\omega}),\\
    &\displaystyle{\overline{P}_{r}^s(\mu_{\omega}):= \underset{n \to \infty}{\limsup}~ \Phi_{\om{},n}^{1/s} V_{\Phi_{\om{},n},r}^{1/r}(\mu_{\omega})}.\end{align*}
For $\epsilon > 0$ and $A \subset \Rl^d$, define $(A)_{\epsilon}:=\{x \in \Rl^d: d(x,a)<\epsilon ~\text{for some}~ a \in A\}$, that is $(A)_{\epsilon}$ is the $\epsilon$-neighbourhood of the set $A$.

To estimate the upper quantization dimension, we need to construct efficient coverings of the random cylinder sets $E_\sigma$. The following lemma provides a uniform bound on the covering number of these sets, independent of the random realization $\omega$, which is essential for controlling the error across different scales.

Assume $D$ to be a constant such that $D^r > 2/ (pc^r).$

\begin{lemma}\label{lemma: G_1,G_2}

    There exist positive integers $G_1,G_2>1$ such that for any $\sigma \in \lmomgp{*}$, $E_{\sigma}$ can be covered by $G_1$ closed balls with radii $\beta\abs{E_{\sigma}}/(8D)$ and $(E_{\sigma})_{\beta\abs{E_{\sigma}}/4}$ can be covered by $G_2$ closed balls with radii $\beta\abs{E_{\sigma}}/(8D).$
\end{lemma}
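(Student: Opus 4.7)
The plan is to reduce the statement, for each $\sigma$, to a single covering problem for the fixed compact set $X$, by exploiting the fact that $S_\sigma$ is a similarity. Since $|X|=1$, every $E_\sigma=S_\sigma(X)$ is just an isometric rescaling of $X$ by the factor $c_\sigma=|E_\sigma|$, so if I can cover $X$ and its $(\beta/4)$-neighborhood by a bounded number of balls of a fixed radius $\beta/(8D)$, then applying $S_\sigma$ scales both the radii and the set sizes by $c_\sigma$, producing the desired coverings with radii $\beta|E_\sigma|/(8D)$.

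First I would construct $G_1$. Because $X\subset \mathbb{R}^d$ is compact, it is totally bounded, so there exists a positive integer $G_1>1$ such that $X$ can be covered by $G_1$ closed balls of radius $\beta/(8D)$. Write such a cover as $X\subset \bigcup_{k=1}^{G_1}\overline{B}(x_k,\beta/(8D))$. Applying the similarity $S_\sigma$ with ratio $c_\sigma$ and using that $S_\sigma(\overline{B}(x_k,\rho))=\overline{B}(S_\sigma(x_k),c_\sigma\rho)$ for every $\rho>0$, we get
\[
E_\sigma=S_\sigma(X)\subset \bigcup_{k=1}^{G_1}\overline{B}\!\left(S_\sigma(x_k),\,c_\sigma\beta/(8D)\right)=\bigcup_{k=1}^{G_1}\overline{B}\!\left(S_\sigma(x_k),\,\beta|E_\sigma|/(8D)\right),
\]
which is the first claim.

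For $G_2$ the key observation is that the $\epsilon$-neighborhood transforms well under similarities: since $\|S_\sigma(x)-S_\sigma(y)\|=c_\sigma\|x-y\|$, one checks directly that
\[
(E_\sigma)_{\beta|E_\sigma|/4}=(S_\sigma(X))_{c_\sigma\beta/4}=S_\sigma\!\left((X)_{\beta/4}\right).
\]
The set $(X)_{\beta/4}$ is bounded (its closure is compact) and independent of $\sigma$, hence there exists a positive integer $G_2>1$ and points $y_1,\dots,y_{G_2}\in \mathbb{R}^d$ with
\[
(X)_{\beta/4}\subset \bigcup_{k=1}^{G_2}\overline{B}(y_k,\beta/(8D)).
\]
Applying $S_\sigma$ again produces a covering of $(E_\sigma)_{\beta|E_\sigma|/4}$ by $G_2$ closed balls of radius $c_\sigma\beta/(8D)=\beta|E_\sigma|/(8D)$, as required.

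There is no real obstacle here; the only point that needs care is the identity $(S_\sigma(X))_{c_\sigma\epsilon}=S_\sigma((X)_\epsilon)$, which follows from $S_\sigma$ being a bijective similarity on $\mathbb{R}^d$ with constant ratio $c_\sigma$. The constants $G_1,G_2$ depend only on $d$, on $|X|=1$, and on $\beta$ and $D$ (which are fixed parameters of the RIFS), so they are uniform over all $\omega\in\Omega$ and all $\sigma\in\Lambda_\omega^*$, which is exactly the uniformity the lemma asserts.
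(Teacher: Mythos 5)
Your proof is correct, but it takes a genuinely different route from the paper. The paper bounds $G_1$ and $G_2$ by a maximal-packing/volume-comparison argument: take a maximal collection of disjoint closed balls of radius $\beta|E_\sigma|/(16D)$ centred in $E_\sigma$ (resp.\ in $(E_\sigma)_{\beta|E_\sigma|/4}$), bound its cardinality by comparing volumes of the small balls to a ball of radius $|E_\sigma| + \beta|E_\sigma|/(16D)$ containing them all, and then double the radii to obtain a cover by $\lfloor(1 + 16D/\beta)^d\rfloor$ (resp.\ $\lfloor(1 + 16D/\beta + 8D)^d\rfloor$) balls. That argument produces explicit constants and uses only that $E_\sigma$ has a fixed diameter, not that it is geometrically a copy of $X$; it would survive in a conformal or more general nonlinear setting. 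Your proof instead covers the single reference set $X$ (and its $(\beta/4)$-neighbourhood) once by compactness and transports that cover through $S_\sigma$, using that $S_\sigma$ extends to a global similarity of $\mathbb{R}^d$ so that $S_\sigma(\overline{B}(x,\rho))=\overline{B}(S_\sigma(x),c_\sigma\rho)$ and $(S_\sigma(X))_{c_\sigma\varepsilon}=S_\sigma((X)_\varepsilon)$. This is more elementary (no volume calculation and no packing step) and exploits the exact self-similar structure; the price is that it leans on $S_\sigma$ being a similarity of all of $\mathbb{R}^d$ with constant ratio, and the constants $G_1,G_2$ are not explicit. One small point worth making explicit if you write this up: the maps $S_{i,j}$ in the paper are given only as self-maps of $X$ satisfying $\|S_{i,j}(x)-S_{i,j}(y)\|=c_{i,j}\|x-y\|$, so you should note (or cite) that any such map extends uniquely to a similarity of $\mathbb{R}^d$ before asserting the ball-to-ball and neighbourhood identities; once that is granted, your argument goes through without further difficulty.
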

\begin{proof}
    For $A \subset \Rl^d$, let $H_{A}$ be the largest number of mutually disjoint closed balls of radius $\beta\abs{A}/(16D)$ centred in $A$. Then calculating volumes of these balls corresponding to the set $E_{\sigma}$, we can deduce that
    \begin{equation*}
        H_{E_{\sigma}} \cdot \lr{(}{\beta\abs{E_{\sigma}}/(16D)}{)}^d \leq \lr{(}{\abs{E_{\sigma}} + \frac{\beta\abs{E_{\sigma}}}{16D}}{)}^d, 
    \end{equation*}
which gives $ H_{E_{\sigma}} \leq \lfloor \lr{(}{1+\frac{16D}{\beta}}{)}^d\rfloor$, where $\lfloor x \rfloor$ denotes the largest integer less than or equals to $x$.
Setting \[G_1:=\lfloor \lr{(}{1+\frac{16D}{\beta}}{)}^d\rfloor,\] we see that $G_1$ is independent of $\sigma$ and $E_{\sigma}$ can be covered by $G_1$ closed balls of radii $2 \cdot \beta\abs{A}/(16D)= \beta\abs{A}/(8D) $.

A similar calculation can be done for the set $(E_{\sigma})_{\beta\abs{E_{\sigma}}/4}$\\
resulting in $H_{(E_{\sigma})_{\beta\abs{E_{\sigma}}/4}} \leq \lfloor \lr{(}{1+\frac{16D}{\beta}+8D}{)}^d\rfloor$, which suggests \[G_2:=\lfloor \lr{(}{1+\frac{16D}{\beta}+8D}{)}^d\rfloor.\] This gives us the desired result.
\end{proof}

A key difficulty in quantization theory is ensuring that optimal points are distributed somewhat uniformly with respect to the measure. The next lemma establishes a 'finite local complexity' property: it asserts that the number of quantization centres falling within a specific neighbourhood of a cylinder set is uniformly bounded, preventing excessive clustering.

\begin{lemma}\label{lemma:Card(alpha_sigma)<L}
    There exists a constant $G \geq 1$ such that for any $m \leq \mathrm{card}(\Gamma_{\om{},n})$ and for any m-optimal set $\alpha$ for $V_{m,r}(\mu_{\omega})$, it holds for all $\sigma \in \Gamma_{\om{},n}$ that
    \begin{equation*}
        \mathrm{card}(\alpha_{\sigma}) \leq G,
    \end{equation*}
    where ${\alpha_{\sigma}}=\alpha \cap (E_{\sigma})_{\frac{\beta \abs{E_{\sigma}}}{8}}$.
\end{lemma}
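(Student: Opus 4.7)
I argue by contradiction. Suppose, to the contrary, that for some $m \leq \Phi_{\om{},n}$ and some $m$-optimal set $\alpha$ for $V_{m,r}(\muomg)$ there is $\sigma_0 \in \gmomgn{n}$ with $\operatorname{card}(\alpha_{\sigma_0}) > G$, where $G$ is a positive integer constant (depending only on $\beta$, $D$, $r$, $d$ and on the minima $p$, $c$ of the RIFS) to be determined in the course of the argument. The strategy is to exhibit a new set $\alpha' \subset \Rl^d$ with $\operatorname{card}(\alpha') \leq m$ and $\int d(x, \alpha')^r \, d\muomg(x) < \int d(x, \alpha)^r \, d\muomg(x)$, contradicting the $m$-optimality of $\alpha$.

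The first step is to show that the neighbourhoods $\{(E_\sigma)_{\beta\abs{E_\sigma}/8}\}_{\sigma \in \gmomgn{n}}$ are pairwise disjoint. Indeed, any two distinct --- hence incomparable --- $\sigma, \tau \in \gmomgn{n}$ share a longest common predecessor $\rho$, and applying (\ref{inq:applicable UESSC}) to $\rho$ gives $\operatorname{dist}(E_\sigma, E_\tau) \geq \beta \max(\abs{E_\sigma}, \abs{E_\tau}) > \beta(\abs{E_\sigma} + \abs{E_\tau})/8$. Consequently $\sum_{\sigma \in \gmomgn{n}} \operatorname{card}(\alpha_\sigma) \leq m \leq \operatorname{card}(\gmomgn{n})$, so the assumption $\operatorname{card}(\alpha_{\sigma_0}) \geq 2$ forces, by pigeonhole, the existence of some $\tau_0 \in \gmomgn{n}$ with $\alpha_{\tau_0} = \emptyset$; in particular $d(x, \alpha) \geq \beta \abs{E_{\tau_0}}/8$ for all $x \in E_{\tau_0}$.

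Next, taking $G \geq G_2$ and applying Lemma \ref{lemma: G_1,G_2} to cover $(E_{\sigma_0})_{\beta \abs{E_{\sigma_0}}/4} \supset \alpha_{\sigma_0}$ by $G_2$ closed balls of radius $\beta \abs{E_{\sigma_0}}/(8D)$, a further pigeonhole produces $a_1, a_2 \in \alpha_{\sigma_0}$ with $d(a_1, a_2) \leq \beta \abs{E_{\sigma_0}}/(4D)$. I then set $\alpha' := (\alpha \setminus \{a_1\}) \cup \{b\}$, where $b \in E_{\tau_0}$ is chosen as an optimal $1$-point quantizer for $\muomg|_{E_{\tau_0}}$ (so $\operatorname{card}(\alpha') \leq m$), and estimate $\int [d(x, \alpha)^r - d(x, \alpha')^r] \, d\muomg$ by splitting the domain over $E_{\tau_0}$ (where the old distance is $\geq \beta \abs{E_{\tau_0}}/8$ and the new is $\leq \abs{E_{\tau_0}}$) and over the Voronoi region $W(a_1)$ of $a_1$ in $\alpha$ (where $d(x, \alpha \setminus \{a_1\}) \leq d(x, a_2) \leq d(x, a_1) + d(a_1, a_2)$, so the mean value theorem for $r \geq 1$ or subadditivity for $r < 1$ bounds the loss by a multiple of $d(a_1, a_2)^{\min(r, 1)}$). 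The uniform comparability $p_\sigma c_\sigma^r \asymp n^{-1}$ for $\sigma \in \gmomgn{n}$, which follows directly from the definition of $\gmomgn{n}$, places both the gain (from $E_{\tau_0}$) and the loss (from $W(a_1)$) on the common scale $n^{-1}$.

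The main obstacle is precisely this quantitative balance of gain against loss, delicate when $\beta$ is small: a single application of Lemma \ref{lemma: G_1,G_2} yields only $d(a_1, a_2) \lesssim \abs{E_{\sigma_0}}$, which need not be small enough on its own. I would overcome this by iterating the covering argument at successively finer radii $\beta \abs{E_{\sigma_0}}/(8 D^k)$ inside the ball containing $a_1$; with $G \gtrsim G_2^{\,k}$ points in $\alpha_{\sigma_0}$, pigeonhole then forces a close pair at distance $\lesssim D^{-k}\abs{E_{\sigma_0}}$. Choosing $k$ --- and thereby $G$ --- large enough that this distance makes $d(a_1, a_2)^{\min(r,1)}$ negligible compared to $\muomg(E_{\tau_0}) \abs{E_{\tau_0}}^r$ produces the required strict decrease in the $L^r$-cost, contradicting the optimality of $\alpha$. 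The value of $G$ is fixed by this calibration.
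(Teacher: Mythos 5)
Your plan differs structurally from the paper's, and the difference is fatal at a specific point: the claimed \emph{gain} over $E_{\tau_0}$ does not exist in general.

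You swap a single point ($\alpha' = (\alpha\setminus\{a_1\})\cup\{b\}$, with $b$ an optimal $1$-point quantizer for $\mu_\omega|_{E_{\tau_0}}$) and write that on $E_{\tau_0}$ ``the old distance is $\geq \beta|E_{\tau_0}|/8$ and the new is $\leq |E_{\tau_0}|$.'' But those two bounds point the wrong way: for $\beta<8$, which is the typical regime, $\beta|E_{\tau_0}|/8 < |E_{\tau_0}|$, so they do not demonstrate that $\int_{E_{\tau_0}}[d(x,\alpha)^r-d(x,\alpha')^r]\,d\mu_\omega>0$. In fact there is no way to force a positive gain with a single added point. The best $1$-point error for $\mu_\omega|_{E_{\tau_0}}$ scales like $V_{1,r}(\mu_{\mathcal{L}^{|\tau_0|}(\omega)})\,c_{\tau_0}^r\,p_{\tau_0}$, and under the UESSC the support $F_{\mathcal{L}^{|\tau_0|}(\omega)}$ contains pieces separated by at least $\beta\,c_{\min}$, so $V_{1,r}(\mu_{\mathcal{L}^{|\tau_0|}(\omega)})$ is bounded \emph{below} by a quantity comparable to $\beta^r$ times a mass factor. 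That new-error floor sits at the same $\beta^r$ scale as the old-error floor $(\beta/8)^r c_{\tau_0}^r p_{\tau_0}$; whether the difference is positive depends on the specific $c$'s and $p$'s, and can be zero or negative. Because you anchor the entire contradiction on this gain being strictly positive and of order $n^{-1}$, the argument collapses regardless of how finely you iterate the covering to shrink the ``loss'' term from the Voronoi cell of $a_1$.

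The paper sidesteps this with a different swap. It removes \emph{all} $G=G_1+G_2$ points of $\alpha_\sigma$ and inserts $G_1$ points covering $E_\tau$ at the strictly finer radius $\beta|E_\tau|/(8D)$ together with $G_2$ points covering $(E_\sigma)_{\beta|E_\sigma|/4}$ at radius $\beta|E_\sigma|/(8D)$ (Lemma~\ref{lemma: G_1,G_2}). Since $D$ is fixed in advance with $D^r > 2/(pc^r) > 1$, the new error on $E_\sigma\cup E_\tau$ is at most $\frac{\beta^r}{(8D)^r}(p_\sigma c_\sigma^r + p_\tau c_\tau^r) < \frac{2\beta^r (pc^r)}{(8D)^r\,n}$, and this is strictly below the old-error lower bound $\frac{(pc^r)^2\beta^r}{8^r n}$ on $E_\tau$ alone precisely because $D^r>2/(pc^r)$. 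This is where the strict gain comes from: the $G_1$ cover of $E_\tau$ beats the $\beta|E_\tau|/8$ floor by a factor $D^{-r}$. The $G_2$ points then serve as drop-in replacements for the removed $a_i$'s: for any $y\in F_\omega\setminus(E_\sigma\cup E_\tau)$ and any removed $b\in\alpha_\sigma$, the segment from $y$ to $b$ meets the sphere $\partial B(b,\beta|E_\sigma|/8)\subset (E_\sigma)_{\beta|E_\sigma|/4}$, so some $f_k$ is within $\beta|E_\sigma|/(8D)$ of that crossing point, giving $\|y-f_k\|\le\|y-b\|$ and hence $d(y,\alpha')\leq d(y,\alpha)$ pointwise. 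That replacement trick is what eliminates the ``loss'' term entirely, rather than trying to make it small. Your iterated-covering idea for producing an arbitrarily close pair $a_1,a_2$ is a clean way to control a loss, but it is aimed at the wrong half of the inequality: the missing ingredient is a guaranteed gain, and that requires multiple inserted points at a strictly finer scale, as in the paper.
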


    \begin{proof} Let $m \in \N$ be such that $m \leq \operatorname{card}(\gmomgn{n}{})$ and $\alpha$ be an arbitrary $m$-optimal set for $V_{m,r}(\mu_{\om{}})$.
    
    By UESSC, for any two distinct $\sigma, \tau \in \Gamma_{\om{},n}$, we have
    \[(E_{\sigma})_{\beta \abs{E_{\sigma}}/4} \cap (E_{\tau})_{\beta \abs{E_{\tau}}/4} = \emptyset.\]
Set  $G := G_1 + G_2$, as in Lemma \ref{lemma: G_1,G_2}.

If possible, suppose that there exist some $\sigma \in \Gamma_{\om{},n}$, such that $\operatorname{card}(\alpha_{\sigma}) > G$. Then for some $\tau \in \Gamma_{\om{},n} $, we have $\operatorname{card}(\alpha_{\tau}) = 0$,  since  $\operatorname{card}(\alpha) \leq \operatorname{card}(\Gamma_{\om{},n}).$

Let us choose $a_1, \ldots, a_G \in \alpha_{\sigma}$ (which is possible by the assumption $\operatorname{card}(\alpha_{\sigma}) > G$). Let $o_1, \ldots, o_{G_1} $ and  $f_1, \ldots, f_{G_2}$ be the centres of the $ G_1$ and $G_2$ closed balls with radii $\beta\abs{E_{\tau}}/(8D)$ and $\beta\abs{E_{\sigma}}/(8D)$ covering $ E_{\tau}$ and $ (E_{\sigma})_{{\beta}\abs{E_{\sigma}}/4}$, respectively (as in Lemma \ref{lemma: G_1,G_2}).

Setting $\alpha' := \left(\alpha \setminus \{a_1, \ldots, a_G\}\right) \cup \{o_1, \ldots, o_{G_1}, f_1, \ldots, f_{G_2}\}$, we get 
\begin{align*}
\int_{E_{\tau}} d(x,\alpha)^r d\mu_{\om{}}(x) \geq \frac{\beta^r\abs{E_{\tau}}^r}{8^r} \mu_{\om{}}(E_{\tau})&= \frac{\beta^r}{8^r}(p_{\tau} c_{\tau}^r) \\ &\geq \frac{(pc^r) \beta^r}{8^r}(p_{\tau^-} c_{\tau^-}^r)
\\ &\geq \frac{(pc^r) \beta^r}{8^r}((pc^r)/n) \\ &= \frac{(pc^r)^2\beta^r}{8^r n}
\end{align*}
and by the definition of $\alpha'$, we get 
\begin{align*}
&\int_{E_{\sigma}\cup E_{\tau}} d(x,\alpha')^r d\mu_{\om{}}(x) \\
&\leq \int_{E_{\sigma}} d(x,\alpha')^r d\mu_{\om{}}(x) + \int_{E_{\tau}} d(x,\alpha')^r d\mu_{\om{}}(x)\\
&\leq \frac{\beta^r\abs{E_{\sigma}}^r}{D^r 8^r} \mu_{\om{}}(E_{\sigma}) + \frac{\beta^r\abs{E_{\tau}}^r}{D^r 8^r} \mu_{\om{}}(E_{\tau})\\
&= \frac{\beta^r}{D^r 8^r} (p_{\sigma}c_{\sigma}^r+p_{\tau}c_{\tau}^r)\\
&< \frac{\beta^r}{D^r 8^r} (2(pc^r)/n)< \frac{(pc^r)^2\beta^r}{8^r n}.
\end{align*}
Hence, we have
\begin{equation}\label{inq: lemma card<L, 1}
    \int_{E_{\sigma}\cup E_{\tau}} d(x,\alpha)^r d\mu_{\om{}}(x)>\int_{E_{\sigma}\cup E_{\tau}} d(x,\alpha')^r d\mu_{\om{}}(x).
\end{equation}
For $y \in F_{\om{}} \setminus (E_{\sigma} \cup E_{\tau})$  and for any $ b \in \alpha_{\sigma}$, let $x_0$  be the intersection of the line between $y$ and $b$ and the surface of the closed ball $B(b, \beta\abs{E_{\sigma}}/8)$. Then $x_0 \in (E_{\sigma})_{\beta\abs{E_{\sigma}}/4}.$
 
Since \[\displaystyle{(E_{\sigma})_{\beta \abs{E_{\sigma}}/4} \subset \bigcup_{k=1}^{G_2} B(f_k,\beta \abs{E_{\sigma}}/8D)}~,\]
 there exits $1\leq k \leq G_2$ such that $x_0 \in B(f_k,\beta \abs{E_{\sigma}}/8D)$. So,
\begin{align*}
\|y - f_k\| &\leq \|y - x_0\| + \|x_0 - f_k\| \\ 
&\leq \|y - x_0\| + \frac{\beta\abs{E_{\sigma}}}{8D} \\
&\leq \|y - x_0\| + \frac{\beta\abs{E_{\sigma}}}{8}=\|y - b\|,
\end{align*}
yielding $\displaystyle{ d(y,\alpha_{\sigma}) \geq  \min_{1 \leq k \leq G_2} \|y - f_k\|}
$. Then it follows that for all $y \in F_{\om{}} \setminus (E_{\sigma}\cup E_{\tau})$
\begin{equation*}
    d(y,\alpha)\geq d(y,\alpha')
\end{equation*}
and hence 
\begin{equation}\label{inq: lemma card<L, 2}
     \int_{F_{\om{}} \setminus (E_{\sigma}\cup E_{\tau})} d(y,\alpha)^r d\mu_{\om{}}(y)>\int_{F_{\om{}} \setminus (E_{\sigma}\cup E_{\tau})} d(y,\alpha')^r d\mu_{\om{}}(y).
\end{equation}
Using (\ref{inq: lemma card<L, 1}) and (\ref{inq: lemma card<L, 2}), we get

\begin{align*}
V_{m,r}(\mu_{\om{}}) &= \sum_{\sigma \in \Gamma_{\om{},n}} \int_{E_{\sigma}} d(x,\alpha)^r d\mu_{\om{}}(x) \\ &> \sum_{\sigma \in \Gamma_{\om{},n}} \int_{E_{\sigma}} d(x,\alpha')^r d\mu_{\om{}}(x)\\
&=  \int_{F_{\om{}}}  d(x,\alpha')^r d\mu_{\om{}}(x)\geq V_{m,r}(\mu_{\om{}}).
\end{align*}
This is a contradiction since $\operatorname{card}(\alpha')=\operatorname{card}(\alpha)\leq m$ and $\alpha$ is a $m$-optimal set. Hence, the lemma follows. 
\end{proof}

Now, we also ensure that the quantization error does not vanish too quickly on any cylinder. The following lemma establishes a uniform lower bound proportional to the geometric size of the cylinder.

\begin{lemma}\label{lemma:int d(x,alpha)>Dh(sigma)}
    Let $\alpha$ be a non-empty finite subset of $\mathbb{R}^d$. Then there exists a constant $M=M(\alpha)>0$ such that
    \begin{equation*}
        \underset{E_{\sigma}}{\int} d(x,\alpha)^r d \mu_{\omega}(x) \geq M \cdot (p_{\sigma}c_{\sigma}^r)    
    \end{equation*}
    holds for any $\sigma \in \lmbd{\om{}}{*}$.
\end{lemma}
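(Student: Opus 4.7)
My plan is to exploit the semi-self-similarity relation (\ref{mu def sascha (n)}) to change variables inside $E_\sigma$, thereby reducing the lemma to a uniform positive lower bound on the finite-order quantization error $V_{m,r}(\mu_{\om{}'})$ valid for every $\om{}' \in \Om{}$, where $m := \mathrm{card}(\alpha)$. Write $n = \abs{\sigma}$. From (\ref{mu def sascha (n)}), keeping only the summand $\tau = \sigma$ (all others being non-negative), and using $S_\sigma^{-1}(E_\sigma) = X$ together with the similarity relation $\norm{S_\sigma(y) - a} = c_\sigma \norm{y - S_\sigma^{-1}(a)}$, I obtain
\begin{equation*}
\int_{E_\sigma} d(x, \alpha)^r d\mu_{\om{}}(x) \geq p_\sigma c_\sigma^r \int_X d\bigl(y,\, S_\sigma^{-1}(\alpha)\bigr)^r d\mu_{\mathcal{L}^n(\om{})}(y) \geq p_\sigma c_\sigma^r \, V_{m, r}\bigl(\mu_{\mathcal{L}^n(\om{})}\bigr),
\end{equation*}
since $\mathrm{card}(S_\sigma^{-1}(\alpha)) \leq m$ and $V_{m,r}(\cdot)$ is by definition an infimum over sets of cardinality at most $m$. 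Hence it suffices to prove $\inf_{\om{}' \in \Om{}} V_{m,r}(\mu_{\om{}'}) > 0$.

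To establish this uniform bound, I use the UESSC to locate, inside $F_{\om{}'}$, at least $m+1$ pairwise well-separated cylinder pieces of uniformly positive mass. Fix $n_0 := \lceil \log_2(m+1) \rceil$; since $\mathrm{card}(\mathbf{I}_i) \geq 2$ for every $i$, one has $\mathrm{card}(\lmbd{\om{}'}{(n_0)}) \geq 2^{n_0} \geq m+1$. I claim that any two distinct $\tau, \tau' \in \lmbd{\om{}'}{(n_0)}$ satisfy $\mathrm{dist}(E_\tau, E_{\tau'}) \geq \beta c^{n_0}$: letting $j \leq n_0$ be the smallest position at which $\tau$ and $\tau'$ disagree and $\rho$ their common prefix of length $j-1$, (\ref{inq:applicable UESSC}) applied to $\rho$ gives $\mathrm{dist}(E_{\rho\tau_j}, E_{\rho\tau'_j}) \geq \beta \, c_\rho \, c \geq \beta c^{n_0}$, and the inclusions $E_\tau \subset E_{\rho\tau_j}$, $E_{\tau'} \subset E_{\rho\tau'_j}$ finish the claim. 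From (\ref{mu def sascha (n)}), each such piece also satisfies $\mu_{\om{}'}(E_\tau) \geq p_\tau \geq p^{n_0}$.

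A pigeonhole argument now closes the estimate. Set $\delta := \beta c^{n_0}$. For any $\alpha' \subset \Rl^d$ with $\mathrm{card}(\alpha') \leq m$, each ball $B(a, \delta/2)$, $a \in \alpha'$, meets at most one of the $(\geq m+1)$ pieces $\{E_\tau : \tau \in \lmbd{\om{}'}{(n_0)}\}$, for otherwise two such pieces would lie within $\delta$ of one another, contradicting the preceding separation. Hence some $E_{\tau_0}$ lies entirely outside $\bigcup_{a \in \alpha'} B(a, \delta/2)$, whence
\begin{equation*}
\int d(y, \alpha')^r d\mu_{\om{}'}(y) \geq (\delta/2)^r \mu_{\om{}'}(E_{\tau_0}) \geq \bigl(\beta c^{n_0}/2\bigr)^r p^{n_0}.
\end{equation*}
Taking the infimum over admissible $\alpha'$ delivers $V_{m,r}(\mu_{\om{}'}) \geq M(\alpha) := (\beta c^{n_0}/2)^r p^{n_0}$, uniformly in $\om{}'$, and combining with the opening display finishes the proof. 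The only real obstacle I foresee is the iterated use of UESSC: the applicable form (\ref{inq:applicable UESSC}) only separates direct siblings at a single level, so one must track the first-disagreement position of $\tau$ and $\tau'$ to convert sibling separation into the required uniform separation of arbitrary level-$n_0$ cousins.
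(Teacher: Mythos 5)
Your proof is correct, and the core mechanism is the same as the paper's: use the UESSC to separate cylinders at a fixed depth, pigeonhole to find one cylinder that is uniformly far from every point of $\alpha$, and integrate over that cylinder. The main organizational difference is that you first invoke the semi-self-similarity (\ref{mu def sascha (n)}) and a change of variables via $S_\sigma$ to factor out $p_\sigma c_\sigma^r$, reducing the lemma to the cleaner intermediate statement $\inf_{\om{}'\in\Om{}} V_{m,r}(\mu_{\om{}'}) > 0$, which you then establish by pigeonholing at depth $n_0=\lceil\log_2(m+1)\rceil$ from the root. The paper instead runs the pigeonhole directly over the depth-$J$ successor set $\Lambda_J(\sigma)$, where $2^J>\operatorname{card}(\alpha)$, and obtains $M=(pc^r)^J\beta^r/2^r$ without any explicit renormalization; your $M=(\beta/2)^r(pc^r)^{n_0}$ agrees up to the harmless difference between $J$ and $n_0$. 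Your version has the advantage of isolating a self-contained, reusable uniform lower bound on the $m$-th quantization error; the paper's is slightly more direct. One point worth noting in your favor: you explicitly flag that the applicable form of the UESSC (\ref{inq:applicable UESSC}) only separates siblings, so passing to level-$n_0$ cousins requires tracking the first position of disagreement and using nestedness of the $E_\tau$. The paper relies on exactly the same fact when it asserts that the existence of a single $b\in\alpha$ close to two distinct $\tau_1,\tau_2\in\Lambda_J(\sigma)$ "contradicts UESSC," but leaves that first-disagreement step implicit.
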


\begin{proof}
Let $\sigma \in \lmbd{\om{}}{*}$ and $J \geq 1$ be such that $2^J > \operatorname{card}(\alpha)$.

If possible, suppose for some $b \in \alpha$ there exists \\
$\tau_1 \neq \tau_2 \in \lmbd{J}{}(\sigma)$ such that
\begin{equation}\label{inq: lemma:int d(x,alpha)>Dh(sigma), 1 }
    \operatorname{dist}(b, E_{\tau_i}) < \frac{\beta}{2} \min\{\abs{E_{\tau}}: \tau \in \Lambda_J(\sigma)\}, \quad  i= 1, 2,
\end{equation}
where $\operatorname{dist}(b, E_{\tau_i})=\operatorname{dist}(\{b\}, E_{\tau_i})$. Then we can deduce 
\[\operatorname{dist}(E_{\tau_1}, E_{\tau_2}) < \beta \min\{\abs{E_{\tau}}: \tau \in \Lambda_J(\sigma)\},\]
which contradicts UESSC. So for each $ b \in \alpha$ there is at most one $\tau \in \Lambda_J(\sigma)$ such that (\ref{inq: lemma:int d(x,alpha)>Dh(sigma), 1 }) holds. Again, since
$\operatorname{card}(\alpha) <  2^J \leq \operatorname{card}(\lmbd{J}{}(\sigma))$, there exists some $\tau' \in \lmbd{J}{}(\sigma)$ such that
\begin{equation}\label{inq:min dist(a,E_T)}
 \min_{b \in \alpha} \operatorname{dist}(b, E_{\tau'}) \geq \frac{\beta}{2} \min\{\abs{E_{\tau}}: \tau \in \Lambda_J(\sigma)\}.
    \end{equation}
Utilizing (\ref{inq:min dist(a,E_T)}), we get
\begin{align*}
&\int_{E_{\sigma}} d(x,\alpha)^r d\mu_{\om{}}(x)\\
&\geq \int_{E_{\tau'}} d(x,\alpha)^r d\mu_{\om{}}(x)\\
&\geq \mu_{\om{}}(E_{\tau'}) \frac{\beta^r}{2^r} \lr{[}{\min\{\abs{E_{\tau}}: \tau \in \Lambda_J(\sigma)\}}{]}^r \\
&\geq (p^Jp_{\sigma}) \frac{\beta^r}{2^r} (c^J\abs{E_{\sigma}})^r= (pc^r)^J  \frac{\beta^r}{2^r} (p_{\sigma}c_{\sigma}^r) =: M(p_{\sigma}c_{\sigma}^r)~,
\end{align*}
where $M = (pc^r)^J  \frac{\beta^r}{2^r}.$ 
This completes the proof.
\end{proof}

We now turn to the lower bound for the quantization error. By utilizing the separation condition (UESSC) and the mass distribution principle, we derive the following estimate, which relates the $n$-th quantization error to the geometric scale of the maximal antichain $\Gamma_{\omega,n}$.
 
\begin{lemma}\label{lemma:D<phi V_phi}
    There exists a positive constant $\widetilde{D}$ such that for $n \in \N$ 
    \begin{equation*}
        V_{\Phi_{\om{},n},r}(\mu_{\omega}) > \widetilde{D}~ \Phi_{\om{},n}^{-r/t_{n,r}}. 
    \end{equation*}
\end{lemma}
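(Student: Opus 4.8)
I want to bound the quantization error $V_{\Phi_{\om{},n},r}(\muomg)$ from below by constructing, for each $\sigma \in \gmomgn{n}$, a positive lower bound on the local distortion $\int_{E_\sigma} d(x,\alpha)^r\,d\muomg(x)$ for an arbitrary $\Phi_{\om{},n}$-optimal set $\alpha$. The natural tool is Lemma \ref{lemma:Card(alpha_sigma)<L}, which tells us that the number of points of $\alpha$ lying in the fattened cell $(E_\sigma)_{\beta|E_\sigma|/8}$ is at most the absolute constant $G$ (since $\Phi_{\om{},n} = \operatorname{card}(\gmomgn{n})$, the hypothesis $m \le \operatorname{card}(\gmomgn{n})$ holds with $m = \Phi_{\om{},n}$). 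Points of $\alpha$ outside that neighbourhood are at distance $\ge \beta|E_\sigma|/8$ from every point of $E_\sigma$, so only the $\le G$ nearby points matter for the integral over $E_\sigma$.

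Thus for each $\sigma$, restricting to $E_\sigma$, I am looking at quantizing the measure $\muomg|_{E_\sigma}$ — which by (\ref{mu def sascha (n)}) is $p_\sigma$ times a scaled copy of $\mu_{\mathcal{L}^{|\sigma|}(\om{})}$ carried onto $E_\sigma = S_\sigma(X)$ — using at most $G$ points (after absorbing the far-away points). By scaling, $\int_{E_\sigma} d(x,\alpha)^r\,d\muomg(x) \ge p_\sigma c_\sigma^r \cdot V_{G,r}(\mu_{\mathcal{L}^{|\sigma|}(\om{})})$. The key sub-point is a \emph{uniform} positive lower bound $\inf_{\om{}} V_{G,r}(\muomg) =: \kappa > 0$: this holds because, by the UESSC, $\muomg$ is never supported on fewer than, say, a controlled number of separated cells at a fixed depth (as in Lemma \ref{lemma:int d(x,alpha)>Dh(sigma)}, applied with a sufficiently deep level $J$ so that $2^J > G$), giving $V_{G,r}(\muomg) \ge (pc^r)^J \beta^r/2^r > 0$ independently of $\om{}$. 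Hence $\int_{E_\sigma} d(x,\alpha)^r\,d\muomg(x) \ge \kappa\, p_\sigma c_\sigma^r$ for every $\sigma \in \gmomgn{n}$.

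Summing over the antichain and using that the $E_\sigma$ have essentially disjoint interiors (more precisely, the measures $\muomg|_{E_\sigma}$ are mutually singular up to $\muomg$-null overlaps, by the UESSC), I get
\[
V_{\Phi_{\om{},n},r}(\muomg) \;\ge\; \sum_{\sigma \in \gmomgn{n}} \int_{E_\sigma} d(x,\alpha)^r\,d\muomg(x) \;\ge\; \kappa \sum_{\sigma \in \gmomgn{n}} p_\sigma c_\sigma^r.
\]
Now I convert the right-hand sum into a power of $\Phi_{\om{},n}$. By the defining relation (\ref{eqn:tnr}), $\sum_{\sigma \in \gmomgn{n}} (p_\sigma c_\sigma^r)^{t_{n,r}/(r+t_{n,r})} = 1$. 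Writing $q := t_{n,r}/(r+t_{n,r}) \in (0,1)$ and applying Jensen's inequality (or the power-mean inequality) to the $\Phi_{\om{},n}$ terms $(p_\sigma c_\sigma^r)^q$ whose sum is $1$: since $x \mapsto x^{1/q}$ is convex, $\frac{1}{\Phi_{\om{},n}}\sum (p_\sigma c_\sigma^r) = \frac{1}{\Phi_{\om{},n}}\sum \big((p_\sigma c_\sigma^r)^q\big)^{1/q} \ge \big(\frac{1}{\Phi_{\om{},n}}\sum (p_\sigma c_\sigma^r)^q\big)^{1/q} = \Phi_{\om{},n}^{-1/q}$, so $\sum_{\sigma \in \gmomgn{n}} p_\sigma c_\sigma^r \ge \Phi_{\om{},n}^{\,1 - 1/q} = \Phi_{\om{},n}^{-r/t_{n,r}}$. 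Combining, $V_{\Phi_{\om{},n},r}(\muomg) \ge \kappa\, \Phi_{\om{},n}^{-r/t_{n,r}}$, and setting $\widetilde D := \kappa$ (or any slightly smaller constant to make the inequality strict) finishes the proof.

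The main obstacle is the uniform lower bound $\inf_{\om{}} V_{G,r}(\muomg) > 0$: one must be careful that $G$ is a fixed constant not depending on $\om{}$ or $n$ (which it is, by Lemma \ref{lemma:Card(alpha_sigma)<L}), and that the separation argument producing a positive $V_{G,r}$ is genuinely uniform in $\om{}$ — this is exactly where the \emph{uniformity} in the UESSC (a single $\beta$ working for all $i$) is used, rather than just a deterministic separation condition. The rest is bookkeeping: handling the far-away points of $\alpha$ cleanly, and making sure the overlaps $E_\sigma \cap E_\tau$ contribute nothing to $\muomg$.
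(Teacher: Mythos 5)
Your proposal follows essentially the same route as the paper: fix a $\Phi_{\om{},n}$-optimal $\alpha$, use Lemma~\ref{lemma:Card(alpha_sigma)<L} to bound the number of near points of $\alpha$ per cell, use Lemma~\ref{lemma:int d(x,alpha)>Dh(sigma)} to get a uniform lower bound on the per-cell error of the form (const)$\cdot p_\sigma c_\sigma^r$, sum over the antichain $\gmomgn{n}$, and convert $\sum p_\sigma c_\sigma^r$ into $\Phi_{\om{},n}^{-r/t_{n,r}}$ via the defining relation \eqref{eqn:tnr}. Your final conversion using the power-mean/Jensen inequality is actually cleaner than the paper's manipulation and even drops the factor $(pc^r)$ the paper has; both yield the desired bound.

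However, your central inequality $\int_{E_\sigma} d(x,\alpha)^r\,d\muomg \;\ge\; p_\sigma c_\sigma^r\, V_{G,r}\!\left(\mu_{\mathcal{L}^{|\sigma|}(\om{})}\right)$ is not justified by the argument you give, and as stated it is not quite correct. You reason that only the $\le G$ points of $\alpha_\sigma$ ``matter,'' but dropping the far points of $\alpha$ is an \emph{upper}-bound operation on the error ($d(x,\alpha_\sigma)\ge d(x,\alpha)$), not a lower one, so you cannot simply restrict to $\alpha_\sigma$. What is needed is a set $\overline{\alpha}_\sigma$ of bounded cardinality satisfying $d(x,\alpha)\ge d(x,\overline{\alpha}_\sigma)$ on $E_\sigma$. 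The paper achieves this by appending to $\alpha_\sigma$ the $G_1$ centres $q_1,\dots,q_{G_1}$ from Lemma~\ref{lemma: G_1,G_2}, which cover $E_\sigma$ by balls of radius $\beta|E_\sigma|/(8D) < \beta|E_\sigma|/8$; since far points of $\alpha$ are at distance $\ge \beta|E_\sigma|/8$ from $E_\sigma$ while the $q_i$'s are within $\beta|E_\sigma|/(8D)$, the replacement can only decrease distances, giving $d(x,\alpha)\ge d(x,\overline{\alpha}_\sigma)$. Then Lemma~\ref{lemma:int d(x,alpha)>Dh(sigma)} is applied to $\overline{\alpha}_\sigma$, whose cardinality is $\le \widetilde{G}:=G+G_1$ (not $G$), and the uniform constant comes from choosing $J$ with $2^J>\widetilde{G}$. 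You gesture at this (``after absorbing the far-away points'') and flag it as bookkeeping, but the covering construction from Lemma~\ref{lemma: G_1,G_2} is a genuine ingredient you should invoke explicitly; with that correction (and $G\to\widetilde{G}$), the rest of your argument is sound and matches the paper.
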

\begin{proof} For $n \in \N$, let $\alpha$ be a $\Phi_{\om{},n}$-optimal set for $V_{\Phi_{\om{},n},r}(\mu_{\om{}})$. For $\sigma \in \gmomgn{n}$, let $q_1,\dots,q_{G_1}$ be the centres of $G_1$ closed balls with radii $\beta \abs{E_{\sigma}}/(8D)$, which covers $E_{\sigma}$ (as in Lemma \ref{lemma: G_1,G_2}). 
Also, set $\overline{\alpha}_{\sigma}:=\alpha_{\sigma}\cup \{q_1,\dots,q_{G_1}\}$, where $\alpha_{\sigma}=\alpha \cap (E_{\sigma})_{\beta \abs{E_{\sigma}}/8}.$ Then for $x \in E_{\sigma}$, we have 
\begin{equation}\label{inq:d(x,alpha)>d(x,alpha^~)}
    d(x,\alpha)\geq d(x,\overline{\alpha}_{\sigma}).
\end{equation}
By Lemma \ref{lemma:Card(alpha_sigma)<L}, we have
 $\operatorname{card}(\overline{\alpha}_{\sigma})\leq G + G_1=:\widetilde{G}.$
 
Then we get
\begin{align}\label{inq:V_{phi_n}Dsumh(sigma)}
    \nonumber V_{\Phi_{\om{},n},r}(\mu_{\om{}})&=\sum_{\sigma \in \gmomgn{n}} \int_{E_{\sigma}} d(x,\alpha)^r d\mu_{\om{}}(x)\\
     \nonumber &\geq \sum_{\sigma \in \gmomgn{n}} \int_{E_{\sigma}} d(x,\overline{\alpha}_{\sigma})^r d\mu_{\om{}}(x)~~~~(\text{by (\ref{inq:d(x,alpha)>d(x,alpha^~)})})\\
      &\geq \sum_{\sigma \in \gmomgn{n}} M (p_{\sigma}c_{\sigma}^r)~~~~(\text{by Lemma \ref{lemma:int d(x,alpha)>Dh(sigma)}}),
 \end{align}
 % \begin{equation}
 %     \implies V_{\Phi_{\om{},n},r}(\mu_{\om{}}) \geq M \sum_{\sigma \in \gmomgn{n}} (p_{\sigma}c_{\sigma}^r),
 % \end{equation}
     where $M = (pc^r)^J  \frac{\beta^r}{2^r}$ and $J$ is given by, $2^J>\widetilde{G}\geq\operatorname{card}(\overline{\alpha}_{\sigma})$. So $M$ is independent of $n$.\\
Note that, for
    $\sigma \in \gmomgn{n}$~, using (\ref{eqn:tnr}), we have  \[\Phi_{\om{},n}^{-r/t_{n,r}} < (pc^r/n)^{r/(r+t_{n,r})} \leq (p_{\sigma^-}c_{\sigma^-}^r)^{r/(r+t_{n,r})}
    \] from which, we can deduce
    \[\Phi_{\om{},n}^{-r/t_{n,r}} (pc^r)^{r/(r+t_{n,r})} < (p_{\sigma}c_{\sigma}^r)^{r/(r+t_{n,r})}.\]
Then it follows that
\begin{align}\label{lemmaa 4.4 (last)}
    \nonumber (p_{\sigma}c_{\sigma}^r)&>(pc^r)^{r/(r+t_{n,r})} (p_{\sigma}c_{\sigma}^r)^{t_{n,r}/(r+t_{n,r})} \Phi_{\om{},n}^{-r/t_{n,r}}
    \\ 
    &> (pc^r) (p_{\sigma}c_{\sigma}^r)^{t_{n,r}/(r+t_{n,r})} \Phi_{\om{},n}^{-r/t_{n,r}}.
    \end{align} 
    Hence \[\displaystyle{\sum_{\sigma \in \gmomgn{n}} (p_{\sigma}c_{\sigma}^r) > (pc^r)\Phi_{\om{},n}^{-r/t_{n,r}}}.\]
Using this in (\ref{inq:V_{phi_n}Dsumh(sigma)}), we get
\begin{equation*}
    V_{\Phi_{\om{},n},r}(\mu_{\om{}}) \geq M \sum_{\sigma \in \gmomgn{n}} (p_{\sigma}c_{\sigma}^r) > \widetilde{D}~ \Phi_{\om{},n}^{-r/t_{n,r}},
\end{equation*}
where $\widetilde{D}=M(pc^r)$ is a positive constant independent of $n$.
\end{proof}

The following lemma provides an upper bound for the quantization error.

\begin{lemma}\label{lemma:phi V_phi<D}
    There exists a positive constant $\underset{\widetilde{}}{D}$ such that for large enough $n \in \N$ 
    \begin{equation*}
      V_{\Phi_{\om{},n},r}(\mu_{\omega}) \leq \underset{\widetilde{}}{D}~\Phi_{\om{},n}^{-r/t_{n,r}},
    \end{equation*}
    where $\underset{\widetilde{}}{D}$ is independent of $n$. 
\end{lemma}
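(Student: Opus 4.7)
The plan is to construct an explicit quantizer of cardinality at most $\Phi_{\om{},n}$ by placing one centre inside each cylinder associated with $\gmomgn{n}$, and then bound its error using the finite maximal antichain decomposition from Proposition \ref{prop: mu om= sum over antichain}. Concretely, for each $\sigma \in \gmomgn{n}$ I would pick an arbitrary point $a_{\sigma} \in E_{\sigma}$ and set $\alpha := \{a_{\sigma} : \sigma \in \gmomgn{n}\}$, so that $\operatorname{card}(\alpha) \leq \Phi_{\om{},n}$. Since $\gmomgn{n}$ is a finite maximal antichain, Proposition \ref{prop: mu om= sum over antichain} gives
\[
V_{\Phi_{\om{},n},r}(\mu_{\om{}}) \leq \int d(x,\alpha)^r \, d\mu_{\om{}}(x) = \sum_{\sigma \in \gmomgn{n}} p_{\sigma} \int d(S_{\sigma}(y), \alpha)^r \, d\mu_{\mathcal{L}^{|\sigma|}(\om{})}(y).
\]
For any $y \in X$, both $S_{\sigma}(y)$ and $a_{\sigma}$ lie in $E_{\sigma}$, which has diameter $c_{\sigma}$ (recall the normalisation $|X|=1$); hence $d(S_{\sigma}(y), \alpha) \leq d(S_{\sigma}(y), a_{\sigma}) \leq c_{\sigma}$. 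This reduces the task to proving $\sum_{\sigma \in \gmomgn{n}} p_{\sigma} c_{\sigma}^r \leq \underset{\widetilde{}}{D}\, \Phi_{\om{},n}^{-r/t_{n,r}}$ for an $n$-independent constant.

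The geometric sum will be converted to the correct order via the two-sided bound on $p_{\sigma} c_{\sigma}^r$ built into the definition of $\gmomgn{n}$, together with the defining identity \eqref{eqn:tnr} for $t_{n,r}$. The upper bound $p_{\sigma} c_{\sigma}^r < pc^r/n$ factors as
\[
p_{\sigma} c_{\sigma}^r = (p_{\sigma} c_{\sigma}^r)^{t_{n,r}/(r+t_{n,r})} \cdot (p_{\sigma} c_{\sigma}^r)^{r/(r+t_{n,r})} \leq (p_{\sigma} c_{\sigma}^r)^{t_{n,r}/(r+t_{n,r})} \left(\tfrac{pc^r}{n}\right)^{r/(r+t_{n,r})},
\]
so summing and applying \eqref{eqn:tnr} yields $\sum_{\sigma} p_{\sigma} c_{\sigma}^r \leq \left(pc^r/n\right)^{r/(r+t_{n,r})}$. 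For the opposite direction, the lower bound $p_{\sigma} c_{\sigma}^r \geq pc^r \cdot p_{\sigma^-}c_{\sigma^-}^r \geq (pc^r)^2/n$ combined with \eqref{eqn:tnr} gives $\Phi_{\om{},n} \cdot ((pc^r)^2/n)^{t_{n,r}/(r+t_{n,r})} \leq 1$, equivalently $pc^r/n \leq (pc^r)^{-1} \Phi_{\om{},n}^{-(r+t_{n,r})/t_{n,r}}$. Plugging this into the previous estimate produces
\[
\sum_{\sigma \in \gmomgn{n}} p_{\sigma} c_{\sigma}^r \leq (pc^r)^{-r/(r+t_{n,r})} \Phi_{\om{},n}^{-r/t_{n,r}} \leq (pc^r)^{-1} \Phi_{\om{},n}^{-r/t_{n,r}},
\]
where the last inequality uses $0 < pc^r < 1$ and $r/(r+t_{n,r}) \leq 1$. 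Setting $\underset{\widetilde{}}{D} := (pc^r)^{-1}$ then concludes the proof.

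The main obstacle I anticipate is the delicate bookkeeping in the third step. Lemma \ref{lemma:D<phi V_phi} exploits only the lower bound on $p_{\sigma^-} c_{\sigma^-}^r$ to derive a lower estimate on the geometric sum, but here both sides of the defining inequality for $\gmomgn{n}$ must be used simultaneously: the strict upper bound $p_{\sigma} c_{\sigma}^r < pc^r/n$ to control the sum, and the implied lower bound $p_{\sigma} c_{\sigma}^r \geq (pc^r)^2/n$ to bound $\Phi_{\om{},n}$ from above via \eqref{eqn:tnr}. The exponents $r/(r+t_{n,r})$ and $(r+t_{n,r})/t_{n,r}$ must multiply to $r/t_{n,r}$ for the final scaling to be correct, which is what makes $t_{n,r}$ (rather than any other exponent) the right quantity here. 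The only subtle point is ensuring $n$-independence of the constant, which is handled by the uniform estimate $(pc^r)^{-r/(r+t_{n,r})} \leq (pc^r)^{-1}$.
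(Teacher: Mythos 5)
Your proposal is correct and takes essentially the same route as the paper: the same quantizer (one point per cylinder in $\Gamma_{\omega,n}$), the same reduction to $\sum_{\sigma}p_\sigma c_\sigma^r$, and the same two-sided use of the defining inequalities for $\Gamma_{\omega,n}$ together with the identity \eqref{eqn:tnr}. The one place you deviate is the very last step: you bound $(pc^r)^{-r/(r+t_{n,r})}$ by $(pc^r)^{-1}$ simply because $t_{n,r}>0$ makes the exponent at most $1$, giving a constant $\underset{\widetilde{}}{D}=(pc^r)^{-1}$ that is independent of both $n$ and $\omega$ and that works for \emph{all} $n$. The paper instead invokes $\underline{t}_r(\omega)$ to argue that for large $n$ one has $t_{n,r}>\underline{t}_r/2$ and obtains $\underset{\widetilde{}}{D}=(pc^r)^{-2r/(2r+\underline{t}_r)}$, a constant that depends on $\omega$ and requires the ``large enough $n$'' caveat. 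Your version is a small but genuine tightening: it removes the dependence on $\underline{t}_r$ and the asymptotic restriction, which is harmless since any uniform upper bound for $(pc^r)^{-r/(r+t_{n,r})}$ suffices for the applications in Proposition \ref{lemma:Dr=tr}.
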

\begin{proof}
For $\sigma \in \gmomgn{n}$, let $x_{\sigma} \in E_{\sigma}$ be arbitrary.\\

\noindent Set $\alpha_0:=\{x_{\sigma}: \sigma \in \gmomgn{n}\}.$ Then 
% \newpage
\begin{align*}
    &V_{\Phi_{\om{},n},r}(\mu_{\om{}}) \\
    &\leq \sum_{\sigma \in \gmomgn{n}} \int_{E_{\sigma}} d(x,\alpha_0)^r d\mu_{\om{}}(x)\\
    &\leq \sum_{\sigma \in \gmomgn{n}} \abs{E_{\sigma}}^r \mu_{\om{}}(E_{\sigma})
    = \sum_{\sigma \in \gmomgn{n}} c_{\sigma}^r p_{\sigma}\\
    %&= \sum_{\sigma \in \gmomgn{n}} (p_{\sigma}c_{\sigma}^r)^{\frac{t_{n,r}}{r+t_{n,r}}} (p_{\sigma}c_{\sigma}^r)^{\frac{r}{r+t_{n,r}}}\\
    &\leq \sum_{\sigma \in \gmomgn{n}} (p_{\sigma}c_{\sigma}^r)^{\frac{t_{n,r}}{r+t_{n,r}}} \left(n^{-1}(pc^r)\right)^{\frac{r}{r+t_{n,r}}}\\
    %&=\left(n^{-1}(pc^r)\right)^{\frac{r}{r+t_{n,r}}}\\
    &= \left[\left(n^{-1}(pc^r)\right)^{\frac{t_{n,r}}{r+t_{n,r}}} \cdot(pc^r)^{\frac{t_{n,r}}{r+t_{n,r}}} \cdot \Phi_{\om{},n} \right]^{\frac{r}{t_{n,r}}} (pc^r)^{\frac{-r}{r+t_{n,r}}} \Phi_{\om{},n}^{\frac{-r}{t_{n,r}}}\\
    &\leq \lr{[}{\sum_{\sigma \in \gmomgn{n}} (p_{\sigma}c_{\sigma}^r)^{\frac{t_{n,r}}{r+t_{n,r}}}   }{]}^{\frac{r}{t_{n,r}}} (pc^r)^{\frac{-r}{r+t_{n,r}}} \Phi_{\om{},n}^{\frac{-r}{t_{n,r}}}~~~~(\text{by definition of}~ \gmomgn{n})\\
    &= (pc^r)^{\frac{-r}{r+t_{n,r}}} \Phi_{\om{},n}^{\frac{-r}{t_{n,r}}}~.
\end{align*}
By the definition of $\underline{t}_r$, for large enough $n \in \N$, we have\\
$\frac{1}{2}\underline{t}_r<t_{n,r}$, which gives $\frac{r}{r+t_{n,r}}< \frac{2r}{2r+\underline{t}_r}$ and hence 
\begin{align*}
     V_{\Phi_{\om{},n},r}(\mu_{\om{}}) \leq  (pc^r)^{\frac{-r}{r+t_{n,r}}} \Phi_{\om{},n}^{\frac{-r}{t_{n,r}}}
     \leq  (pc^r)^{\frac{-2r}{2r+\underline{t}_r}} \Phi_{\om{},n}^{\frac{-r}{t_{n,r}}}=\underset{\widetilde{}}{D}~ \Phi_{\om{},n}^{\frac{-r}{t_{n,r}}}~,
\end{align*}
where $\underset{\widetilde{}}{D} = (pc^r)^{\frac{-2r}{2r+\underline{t}_r}}$. Hence the proof.
\end{proof}

The dimension calculation relies on comparing the quantization error to the size of the covering antichain $\Gamma_{\omega,n}$. To facilitate this, we first establish the growth properties of the sequence $\Phi_{\omega,n}$.

\begin{lemma}\label{lemma:phi_ngoes to infinity}
    For large enough $n \in \N$, the following holds:
    \begin{equation*}
        \Phi_{\om{},n}\leq \Phi_{\om{},n+1} \leq \widetilde{N} \Phi_{\om{},n},
    \end{equation*}
    where $\widetilde{N}:=\max\{\mathrm{card}(\mathbf{I}_i):1 \leq i \leq N\}.$
    Furthermore, \\
    $\Phi_{\om{},n} \to \infty ~\text{as}~ n \to \infty.$
\end{lemma}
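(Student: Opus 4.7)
The plan is to analyze the transition from $\Gamma_{\omega,n}$ to $\Gamma_{\omega,n+1}$ as the threshold drops from $(pc^r)/n$ to $(pc^r)/(n+1)$. Since both sets are finite maximal antichains, there is a natural surjective map between them, and I will obtain uniform control on the preimage sizes.

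\textbf{Step 1 (A canonical surjection).} For every $\tau \in \Gamma_{\omega,n+1}$ I claim there is a unique $\sigma \in \Gamma_{\omega,n}$ with $\sigma \preccurlyeq \tau$. By the FMA property of $\Gamma_{\omega,n}$, $\tau$ is comparable to some $\sigma \in \Gamma_{\omega,n}$; the case $\tau \prec \sigma$ would give
\[
p_\tau c_\tau^r \geq p_{\sigma^-} c_{\sigma^-}^r \geq \frac{pc^r}{n} > \frac{pc^r}{n+1},
\]
contradicting $\tau \in \Gamma_{\omega,n+1}$. Uniqueness follows from the antichain property. This defines a map $f : \Gamma_{\omega,n+1} \to \Gamma_{\omega,n}$. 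The symmetric argument, now using the FMA property of $\Gamma_{\omega,n+1}$ to pick a comparable $\tau \in \Gamma_{\omega,n+1}$ for each $\sigma \in \Gamma_{\omega,n}$, shows that $f$ is surjective. Hence $\Phi_{\omega,n} \leq \Phi_{\omega,n+1}$.

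\textbf{Step 2 (Depth bound and preimage size).} Set $M_{\max} := \max\{p_{i,j} c_{i,j}^r : j \in \mathbf{I}_i,\, i \in \Lambda\} < 1$. For $\tau \in f^{-1}(\sigma)$ with $\tau \succ \sigma$, we have $\sigma \preccurlyeq \tau^-$, so
\[
\frac{pc^r}{n+1} \leq p_{\tau^-} c_{\tau^-}^r \leq M_{\max}^{|\tau^-| - |\sigma|}\, p_\sigma c_\sigma^r < M_{\max}^{|\tau^-| - |\sigma|}\, \frac{pc^r}{n},
\]
forcing $M_{\max}^{|\tau^-| - |\sigma|} > n/(n+1)$. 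Since $M_{\max} < 1$ is fixed, this fails whenever $|\tau^-| - |\sigma| \geq 1$ and $n > M_{\max}/(1-M_{\max})$. Thus for all sufficiently large $n$, every $\tau \in f^{-1}(\sigma)$ is either $\sigma$ itself or a child of $\sigma$, and the antichain property of $\Gamma_{\omega,n+1}$ prevents both types from occurring simultaneously. Consequently $|f^{-1}(\sigma)| \leq \operatorname{card}(\mathbf{I}_{\omega_{|\sigma|+1}}) \leq \widetilde{N}$, giving $\Phi_{\omega,n+1} \leq \widetilde{N}\, \Phi_{\omega,n}$.

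\textbf{Step 3 (Divergence).} Fix arbitrary $L \in \mathbb{N}$. The length-$L$ set $\Lambda_\omega^{(L)}$ is itself an FMA with $\prod_{j=1}^L \operatorname{card}(\mathbf{I}_{\omega_j}) \geq 2^L$ elements. For any $\rho \in \Lambda_\omega^{(L)}$ we have $p_\rho c_\rho^r \geq (pc^r)^L$, so for $n$ large enough the inequality $p_\rho c_\rho^r > (pc^r)/n$ holds for every such $\rho$. An argument analogous to Step 1 then shows each such $\rho$ must be a strict predecessor of some $\tau \in \Gamma_{\omega,n}$, and distinct $\rho_1, \rho_2 \in \Lambda_\omega^{(L)}$ yield distinct $\tau$ (a common successor would force $\rho_1 = \tau|_L = \rho_2$). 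Hence $\Phi_{\omega,n} \geq 2^L$ for $n$ large, and $L$ being arbitrary gives $\Phi_{\omega,n} \to \infty$.

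The main obstacle is Step 2: rigorously excluding deep refinements (grandchildren and beyond) requires the quantitative estimate $M_{\max}^k > n/(n+1)$, which only fails for sufficiently large $n$. This is precisely why the lemma is stated in asymptotic form, rather than holding for all $n$.
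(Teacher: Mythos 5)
Your proof is correct and shares the paper's core idea: for $n$ large, elements of $\Gamma_{\omega,n+1}$ can be at most one level deeper than those of $\Gamma_{\omega,n}$, because the ratio of thresholds $(1/n)\big/\big(1/(n+1)\big)$ tends to $1$ while the per-level contraction of $p_\sigma c_\sigma^r$ is bounded away from $1$. Where the paper argues a dichotomy directly (each $\sigma \in \Gamma_{\omega,n}$ is either in $\Gamma_{\omega,n+1}$ or has all its children there), you package the same information as a surjection $f:\Gamma_{\omega,n+1}\to\Gamma_{\omega,n}$ with fibers of size at most $\widetilde{N}$; these are equivalent, and your version has the advantage of making the comparability and injectivity steps explicit rather than implicit. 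The paper uses the slightly coarser constant $\tilde p\tilde c^r=(\max p_{i,j})(\max c_{i,j})^r$ where you use $M_{\max}=\max(p_{i,j}c_{i,j}^r)$; both work since both are in $(0,1)$. The genuine divergence in method is Step 3: the paper constructs recursively a subsequence $n_0<n_1<\dots$ with $\Phi_{\omega,n_j}\geq \underset{\widetilde{}}{N}^{\,j}\Phi_{\omega,n_0}$ and then invokes monotonicity, whereas you observe directly that once $(pc^r)/n<(pc^r)^L$ every word in $\Lambda_\omega^{(L)}$ is a strict predecessor of some element of $\Gamma_{\omega,n}$, forcing $\Phi_{\omega,n}\geq 2^L$. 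Your version avoids the recursive subsequence entirely and is, in my view, a bit cleaner, but the two give the same conclusion with the same amount of work.
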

\begin{proof}
For $n \in \N$, let $\sigma \in \gmomgn{n}$. Then 
\begin{equation}\label{inq: Phi_n<Phi_n+1 0}
p_{\sigma}c_{\sigma}^r<(pc^r)/n\leq p_{\sigma^-}c_{\sigma^-}^r.
\end{equation}
Set ${\widetilde{p}:=\max_{i,j}\{p_{i,j}\}}$ and ${\widetilde{c}:=\max_{i,j}\{c_{i,j}\}}$. Then we have \\
$0<(\widetilde{p}~\tilde{c}^r)<1$.

Also
$((\widetilde{p}~\tilde{c}^r)/n)<1/(n+1)$ if and only if  $n>((\widetilde{p}~\widetilde{c}^r)^{-1}-1)^{-1}.$ So for $\tau \in \Lambda_1(\sigma)$, we can deduce
\begin{equation}\label{inq:Phi_n<Phi_(n+1)}
(p_{\tau}c_{\tau}^r) \leq (p_{\sigma}c_{\sigma}^r)(\widetilde{p}~\tilde{c}^r)<(pc^r/n)(\widetilde{p}~\tilde{c}^r)<pc^r/(n+1),
\end{equation}
when $n>((\widetilde{p}~\widetilde{c}^r)^{-1}-1)^{-1}.$

Note that if $\sigma \notin \gmomgn{(n+1)}$ then from $(\ref{inq: Phi_n<Phi_n+1 0})$ it follows that $(pc^r)/(n+1) \leq p_{\sigma}c_{\sigma}^r$. Hence by $(\ref{inq:Phi_n<Phi_(n+1)})$ for $n>((\widetilde{p}~\widetilde{c}^r)^{-1}-1)^{-1}$, we have
\begin{align*}
p_{\tau}c_{\tau}^r<(pc^r)/(n+1)\leq p_{\sigma}c_{\sigma}^r=p_{\tau^-}c_{\tau^-}.
\end{align*}
That is $\tau \in \gmomgn{(n+1)}$. So either $\sigma \in \gmomgn{(n+1)}$ or $\tau \in \gmomgn{(n+1)}$ . Hence for $n > \lr{(}{(\Tilde{p}\Tilde{c}^r)^{-1}~-1}{)}^{-1}$, we have \begin{equation}\label{inq: Phi_n<Phi_n+1 Final}
\Phi_{\om{},n}\leq \Phi_{\om{},n+1} \leq \widetilde{N} \Phi_{\om{},n}.
\end{equation}
Let $n_0 \geq \lr{(}{(\Tilde{p}\Tilde{c}^r)^{-1}~-1}{)}^{-1}$. Since $pc^r/n \to 0$ as $n \to \infty$ there exists $n_1>n_0$ such that $\underset{\widetilde{}}{N} \Phi_{\om{},n_0} \leq \Phi_{\om{},n_1},$ where ${\underset{\widetilde{}}{N}:=\min_{1\leq i \leq N}\{\mathrm{card}(\mathbf{I}_i)\} \geq 2}.$ 

Continuing this process, we get $\{n_j\}_{j\geq 0} \subset \N$ such that $\underset{\widetilde{}}{N}^j \Phi_{\om{},n_0} \leq \Phi_{\om{},n_j},~j\geq 1,$ which implies $\Phi_{\om{},n_j} \to \infty$ as $j \to \infty.$ Hence by $(\ref{inq: Phi_n<Phi_n+1 Final})$, we have
\begin{equation*}
    \Phi_{\om{},n} \to \infty ~\text{as}~ n \to \infty.
\end{equation*}
Hence, the proof.
\end{proof}

\begin{corollary}\label{cor:phi_j<n<phi_j+1}
    For every $n\geq \Phi_{\om{},1}$, there exists $j \in \N$ such that $\Phi_{\om{},j} \leq n < \Phi_{\om{},j+1}.$
\end{corollary}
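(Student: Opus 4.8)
The plan is to deduce this directly from Lemma~\ref{lemma:phi_ngoes to infinity}, with no new work beyond a maximum-of-a-finite-set argument. Fix an integer $n \geq \Phi_{\om{},1}$ and consider the set $A_n := \{j \in \N : \Phi_{\om{},j} \leq n\}$. The whole argument reduces to showing that $A_n$ is a non-empty finite subset of $\N$; once that is done, $j := \max A_n$ will be the index we seek.

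First I would check non-emptiness: since $n \geq \Phi_{\om{},1}$ we have $1 \in A_n$. For finiteness, I would invoke the last assertion of Lemma~\ref{lemma:phi_ngoes to infinity}, namely that $\Phi_{\om{},m} \to \infty$ as $m \to \infty$; this produces some $N_0 \in \N$ with $\Phi_{\om{},m} > n$ for every $m \geq N_0$, so that $A_n \subseteq \{1, \dots, N_0-1\}$ and hence $A_n$ is finite. Therefore $j := \max A_n$ exists.

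The conclusion is then immediate: $\Phi_{\om{},j} \leq n$ because $j \in A_n$, while $j+1 \notin A_n$ by maximality of $j$, which forces $\Phi_{\om{},j+1} > n$, i.e. $n < \Phi_{\om{},j+1}$. This gives the desired $j$.

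There is no genuine obstacle here; the corollary is a routine extraction from the previous lemma. The only inputs actually used are that $\Gamma_{\om{},1}$ is non-empty — so $\Phi_{\om{},1}$ is a bona fide positive integer, as noted just before the definition of $t_{n,r}$ — and the divergence $\Phi_{\om{},m}\to\infty$. In particular, the monotonicity $\Phi_{\om{},m}\le\Phi_{\om{},m+1}$ and the bound $\Phi_{\om{},m+1}\le\widetilde{N}\,\Phi_{\om{},m}$ from Lemma~\ref{lemma:phi_ngoes to infinity} are \emph{not} needed for this corollary; they will instead enter later, when the index $j$ furnished here is fed into estimates comparing $V_{n,r}(\mu_{\om{}})$ with $V_{\Phi_{\om{},j},r}(\mu_{\om{}})$.
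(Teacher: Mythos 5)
Your argument is correct and matches the paper, which simply states that the corollary ``follows from Lemma~\ref{lemma:phi_ngoes to infinity}'' without spelling out the details: you have supplied exactly the intended maximum-of-a-finite-set extraction, using non-emptiness from $n\ge\Phi_{\om{},1}$ and finiteness from $\Phi_{\om{},m}\to\infty$. Your side remark is also accurate — the monotonicity and the ratio bound $\Phi_{\om{},m+1}\le\widetilde{N}\Phi_{\om{},m}$ from Lemma~\ref{lemma:phi_ngoes to infinity} play no role here (the $\max A_n$ device sidesteps any need for $\Phi_{\om{},j}$ to be eventually nondecreasing) and are only invoked later, in Lemma~\ref{lemma:P>Q>NQ and  P<Q<NQ}.
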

\begin{proof}
    This follows from Lemma \ref{lemma:phi_ngoes to infinity}.
\end{proof}

\begin{lemma}\label{lemma:P>Q>NQ and  P<Q<NQ}
       $\underline{P}_{r}^s(\mu_{\omega}) \geq \underline{Q}_{r}^s(\mu_{\omega}) \geq \widetilde{N}^{-1/s}~\underline{P}_{r}^s(\mu_{\omega})$
    and $\overline{P}_{r}^s(\mu_{\omega}) \leq \overline{Q}_{r}^s(\mu_{\omega}) \leq \widetilde{N}^{1/s}~\overline{P}_{r}^s(\mu_{\omega})$, holds for all $s>0$.
\end{lemma}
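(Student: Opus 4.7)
The plan is to compare the ``$P$'' quantities, which are $\liminf$/$\limsup$ of $\Phi_{\om{},n}^{1/s} V_{\Phi_{\om{},n},r}^{1/r}(\mu_{\om{}})$ taken along the special subsequence of indices $\{\Phi_{\om{},n}\}$, with the quantization coefficients $\underline{Q}_r^s(\mu_{\om{}})$, $\overline{Q}_r^s(\mu_{\om{}})$ taken over all $n \in \N$. The two ingredients I rely on are the monotonicity of $V_{\cdot,r}(\mu_{\om{}})$ in its first argument and the near-geometric growth estimate $\Phi_{\om{},n} \leq \Phi_{\om{},n+1} \leq \widetilde{N}\,\Phi_{\om{},n}$ provided by Lemma \ref{lemma:phi_ngoes to infinity}.

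For the outer inequalities $\underline{P}_r^s(\mu_{\om{}}) \geq \underline{Q}_r^s(\mu_{\om{}})$ and $\overline{P}_r^s(\mu_{\om{}}) \leq \overline{Q}_r^s(\mu_{\om{}})$, I would simply note that $\Phi_{\om{},n} \to \infty$ by Lemma \ref{lemma:phi_ngoes to infinity}, so the $P$ quantities are $\liminf$/$\limsup$ evaluated along an unbounded selection of indices drawn from $\N$. Standard comparison of $\liminf$ and $\limsup$ along a subsequence (whose indices tend to $\infty$) with those of the full sequence gives these two inequalities.

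For the reverse bounds, I would fix $n \geq \Phi_{\om{},1}$, apply Corollary \ref{cor:phi_j<n<phi_j+1} to choose $j \in \N$ with $\Phi_{\om{},j} \leq n < \Phi_{\om{},j+1}$, and use monotonicity of $V_{\cdot,r}$ to write
\[
V_{\Phi_{\om{},j+1},r}(\mu_{\om{}}) \leq V_{n,r}(\mu_{\om{}}) \leq V_{\Phi_{\om{},j},r}(\mu_{\om{}}).
\]
Combining this with the sandwich $\Phi_{\om{},j} \leq n < \Phi_{\om{},j+1}$ and the bound $\Phi_{\om{},j+1} \leq \widetilde{N}\,\Phi_{\om{},j}$ (valid for large $j$) then yields
\[
\widetilde{N}^{-1/s}\,\Phi_{\om{},j+1}^{1/s} V_{\Phi_{\om{},j+1},r}^{1/r}(\mu_{\om{}}) \;\leq\; n^{1/s} V_{n,r}^{1/r}(\mu_{\om{}}) \;\leq\; \widetilde{N}^{1/s}\,\Phi_{\om{},j}^{1/s} V_{\Phi_{\om{},j},r}^{1/r}(\mu_{\om{}}).
\]
Passing to $\liminf$ and $\limsup$ as $n \to \infty$ (hence $j \to \infty$) immediately delivers the two remaining inequalities $\underline{Q}_r^s(\mu_{\om{}}) \geq \widetilde{N}^{-1/s}\,\underline{P}_r^s(\mu_{\om{}})$ and $\overline{Q}_r^s(\mu_{\om{}}) \leq \widetilde{N}^{1/s}\,\overline{P}_r^s(\mu_{\om{}})$.

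The argument is essentially routine; there is no serious obstacle. The only minor point to be careful about is that Lemma \ref{lemma:phi_ngoes to infinity} gives $\Phi_{\om{},n+1} \leq \widetilde{N}\,\Phi_{\om{},n}$ only for sufficiently large $n$, but since $\liminf$ and $\limsup$ depend only on the tail, this has no effect on the conclusion.
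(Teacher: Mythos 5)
Your proposal is correct and follows essentially the same route as the paper: both use Corollary \ref{cor:phi_j<n<phi_j+1} to sandwich $n$ between $\Phi_{\om{},j}$ and $\Phi_{\om{},j+1}$, apply monotonicity of $V_{\cdot,r}$, and then invoke the near-geometric bound $\Phi_{\om{},j+1} \leq \widetilde{N}\Phi_{\om{},j}$ from Lemma \ref{lemma:phi_ngoes to infinity} before passing to $\liminf$/$\limsup$. The paper writes the chain more tersely, but the argument is the same.
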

\begin{proof}
   It follows from Lemma \ref{lemma:phi_ngoes to infinity} and Corollary \ref{cor:phi_j<n<phi_j+1} that
    \begin{align*}
        \underline{P}_{r}^s(\mu_{\omega}) \geq \underline{Q}_{r}^s(\mu_{\omega})  &\geq \liminf_{j \to \infty}~ \Phi_{\om{},j}^{1/s} V_{\Phi_{\om{},j+1},r}^{1/r}(\mu_{\om{}})\\
         &\geq \widetilde{N}^{-1/s} \liminf_{j \to \infty}~ \Phi_{\om{},j+1}^{1/s} V_{\Phi_{\om{},j+1},r}^{1/r}(\mu_{\om{}})\\
         &= \widetilde{N}^{-1/s}  \underline{P}_{r}^s(\mu_{\omega}) .
    \end{align*}
    Likewise, inequalities for the limit supremum follow.
\end{proof}

\begin{corollary}\label{corr:P iff Q}
    $\underline{P}_{r}^s(\mu_{\omega})>0 \iff \underline{Q}_{r}^s(\mu_{\omega})>0$ ~and~ $\overline{P}_{r}^s(\mu_{\omega})<\infty \iff \overline{Q}_{r}^s(\mu_{\omega})<\infty$. 
\end{corollary}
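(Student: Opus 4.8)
The plan is to obtain this corollary as an immediate consequence of Lemma~\ref{lemma:P>Q>NQ and  P<Q<NQ}, exploiting the fact that $\widetilde{N}=\max\{\operatorname{card}(\mathbf{I}_i):1\le i\le N\}$ is a fixed integer with $2\le \widetilde{N}<\infty$, so that for every $s>0$ both constants $\widetilde{N}^{1/s}$ and $\widetilde{N}^{-1/s}$ are strictly positive and finite.

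Concretely, for the first equivalence I would argue as follows. If $\underline{P}_{r}^s(\mu_{\omega})>0$, then the lower bound $\underline{Q}_{r}^s(\mu_{\omega}) \ge \widetilde{N}^{-1/s}\,\underline{P}_{r}^s(\mu_{\omega})$ from Lemma~\ref{lemma:P>Q>NQ and  P<Q<NQ} forces $\underline{Q}_{r}^s(\mu_{\omega})>0$; conversely, if $\underline{Q}_{r}^s(\mu_{\omega})>0$, the inequality $\underline{P}_{r}^s(\mu_{\omega}) \ge \underline{Q}_{r}^s(\mu_{\omega})$ gives $\underline{P}_{r}^s(\mu_{\omega})>0$. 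For the second equivalence I would use the symmetric chain $\overline{P}_{r}^s(\mu_{\omega}) \le \overline{Q}_{r}^s(\mu_{\omega}) \le \widetilde{N}^{1/s}\,\overline{P}_{r}^s(\mu_{\omega})$: if $\overline{P}_{r}^s(\mu_{\omega})<\infty$ then $\overline{Q}_{r}^s(\mu_{\omega}) \le \widetilde{N}^{1/s}\,\overline{P}_{r}^s(\mu_{\omega})<\infty$, and if $\overline{Q}_{r}^s(\mu_{\omega})<\infty$ then $\overline{P}_{r}^s(\mu_{\omega}) \le \overline{Q}_{r}^s(\mu_{\omega})<\infty$.

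There is no genuine obstacle in this step: all the real work has already been done in Lemma~\ref{lemma:P>Q>NQ and  P<Q<NQ}, which in turn relies on the comparability estimate $\Phi_{\om{},n}\le\Phi_{\om{},n+1}\le\widetilde{N}\,\Phi_{\om{},n}$ of Lemma~\ref{lemma:phi_ngoes to infinity} and on Corollary~\ref{cor:phi_j<n<phi_j+1}. The only thing to be careful about is that the argument is run for a single fixed $s>0$, since $s$ is held constant throughout the statement, so the finiteness and positivity of $\widetilde{N}^{\pm 1/s}$ are unproblematic.
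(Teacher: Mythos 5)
Your argument is correct and is precisely what the paper does: the corollary is read off directly from the two-sided bounds in Lemma~\ref{lemma:P>Q>NQ and  P<Q<NQ}, using only that $\widetilde{N}^{\pm 1/s}$ is a finite positive constant for fixed $s>0$. The paper states this in one line; you have simply unpacked the same observation.
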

\begin{proof}
    These are directly derived from Lemma \ref{lemma:P>Q>NQ and  P<Q<NQ}.
\end{proof}

The auxiliary parameters $\underline{t}_r(\omega),~\bar{t}_r(\omega)$, derived from the set $\Gamma_{\omega,n}$, serve as a bridge between the geometric scaling of the system and the quantization error. In the following proposition, we rigorously identify these parameters with the lower and upper quantization dimensions.

\begin{proposition}\label{lemma:Dr=tr}
    $\underline{D}_r(\mu_{\omega})=\underline{t}_r(\om{})$ and $\overline{D}_r(\mu_{\omega})=\overline{t}_r(\om{})$.
\end{proposition}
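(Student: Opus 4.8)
The plan is to show that the sequence $\Phi_{\om{},n}$ of antichain cardinalities, together with the estimates from Lemmas~\ref{lemma:D<phi V_phi} and \ref{lemma:phi V_phi<D}, pins down the quantization error $V_{n,r}(\mu_{\om{}})$ closely enough that the exponents $t_{n,r}$ control the quantization dimensions. First I would combine Lemmas~\ref{lemma:D<phi V_phi} and \ref{lemma:phi V_phi<D} to get, for all large $n$,
\[
\widetilde{D}\,\Phi_{\om{},n}^{-r/t_{n,r}} < V_{\Phi_{\om{},n},r}(\mu_{\om{}}) \leq \underset{\widetilde{}}{D}\,\Phi_{\om{},n}^{-r/t_{n,r}},
\]
so that, taking logarithms,
\[
\frac{r\log \Phi_{\om{},n}}{-\log V_{\Phi_{\om{},n},r}(\mu_{\om{}})} = \frac{r \log \Phi_{\om{},n}}{\frac{r}{t_{n,r}}\log \Phi_{\om{},n} + O(1)} = \frac{t_{n,r}}{1 + O\!\big(t_{n,r}/(r\log \Phi_{\om{},n})\big)},
\]
and since $\Phi_{\om{},n}\to\infty$ by Lemma~\ref{lemma:phi_ngoes to infinity} while $\{t_{n,r}\}$ is bounded, the correction term vanishes; hence the ratio along the subsequence $n \mapsto \Phi_{\om{},n}$ has the same $\liminf$ and $\limsup$ as $\{t_{n,r}\}$, namely $\underline{t}_r(\om{})$ and $\overline{t}_r(\om{})$.

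Next I would pass from this special subsequence to the full sequence defining $\underline{D}_r$ and $\overline{D}_r$. Given an arbitrary $n \geq \Phi_{\om{},1}$, Corollary~\ref{cor:phi_j<n<phi_j+1} gives $j$ with $\Phi_{\om{},j} \leq n < \Phi_{\om{},j+1}$, and by Lemma~\ref{lemma:phi_ngoes to infinity} we have $\Phi_{\om{},j+1} \leq \widetilde{N}\,\Phi_{\om{},j}$, so $\log n = \log \Phi_{\om{},j} + O(1)$. Monotonicity of $k \mapsto V_{k,r}(\mu_{\om{}})$ gives $V_{\Phi_{\om{},j+1},r}(\mu_{\om{}}) \leq V_{n,r}(\mu_{\om{}}) \leq V_{\Phi_{\om{},j},r}(\mu_{\om{}})$, and feeding in Lemmas~\ref{lemma:D<phi V_phi}, \ref{lemma:phi V_phi<D} together with $\Phi_{\om{},j+1} \leq \widetilde{N}\,\Phi_{\om{},j}$ bounds $-\log V_{n,r}(\mu_{\om{}})$ above and below by $\frac{r}{t_{j,r}}\log \Phi_{\om{},j} + O(1)$ (with possibly $t_{j+1,r}$ in one of the bounds). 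Since $t_{j+1,r}$ and $t_{j,r}$ are both squeezed between $\underline{t}_r$ and $\overline{t}_r$ asymptotically, taking $\liminf$ and $\limsup$ over $n$ of $\frac{r\log n}{-\log V_{n,r}(\mu_{\om{}})}$ recovers exactly $\underline{t}_r(\om{})$ and $\overline{t}_r(\om{})$ respectively.

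The main obstacle I anticipate is bookkeeping the mismatch between the index $t_{j,r}$ and $t_{j+1,r}$ when sandwiching $V_{n,r}$: the lower and upper bounds for $V_{n,r}$ naturally involve $\Phi_{\om{},j+1}$ and $\Phi_{\om{},j}$ with exponents $r/t_{j+1,r}$ and $r/t_{j,r}$, which are not literally equal, so one must argue that along any subsequence realizing the $\liminf$ (resp.\ $\limsup$) of the target ratio, the corresponding $t_{j,r}$ converges to $\underline{t}_r$ (resp.\ $\overline{t}_r$) — this uses that $\Phi_{\om{},j+1}/\Phi_{\om{},j}$ is bounded so shifting the index by one does not change the asymptotic exponent. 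A clean way to organize this is to prove the two inequalities $\underline{D}_r(\mu_{\om{}}) \leq \underline{t}_r(\om{})$ and $\underline{D}_r(\mu_{\om{}}) \geq \underline{t}_r(\om{})$ separately (and likewise for the $\limsup$), in each case choosing the subsequence on the appropriate side and using monotonicity of $V_{k,r}$ in the favorable direction; the boundedness of $\{t_{n,r}\}$ and of $\widetilde{N}$ then makes all error terms negligible after dividing by $\log \Phi_{\om{},j} \to \infty$.
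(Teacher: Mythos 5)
Your proof is correct but takes a genuinely different route from the paper's. The paper argues indirectly, by contradiction: assuming, say, $\underline{t}_r(\omega) < \underline{D}_r(\mu_\omega)$, it picks a large $n$ with $t_{n,r} < \underline{D}_r(\mu_\omega)$, invokes Graf and Luschgy's Proposition~11.3 (which forces the $s$-dimensional quantization coefficient to be infinite below the dimension and zero above it), transfers between $\underline{Q}_r^s,\overline{Q}_r^s$ and $\underline{P}_r^s,\overline{P}_r^s$ via Lemma~\ref{lemma:P>Q>NQ and  P<Q<NQ} and Corollary~\ref{corr:P iff Q}, and then contradicts Lemma~\ref{lemma:phi V_phi<D} (resp.\ Lemma~\ref{lemma:D<phi V_phi}). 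Your argument is direct: you sandwich $V_{n,r}(\mu_\omega)$ between the two power-law estimates, take logarithms, and show that $r\log n/(-\log V_{n,r}(\mu_\omega))$ is asymptotic to $t_{j(n),r}$ with $j(n)$ the index from Corollary~\ref{cor:phi_j<n<phi_j+1}; since $j(n)$ is surjective onto the large integers and $\Phi_{\omega,j+1}\leq\widetilde{N}\Phi_{\omega,j}$ by Lemma~\ref{lemma:phi_ngoes to infinity}, the $\liminf$ and $\limsup$ of this ratio coincide with $\underline{t}_r(\omega)$ and $\overline{t}_r(\omega)$. You get both directions of each equality: restricting to the subsequence $n=\Phi_{\omega,j}$ already yields $\underline{D}_r(\mu_\omega)\leq\underline{t}_r$ and $\overline{D}_r(\mu_\omega)\geq\overline{t}_r$, while the general-$n$ sandwich (together with monotonicity of $k\mapsto V_{k,r}(\mu_\omega)$) gives the reverse inequalities. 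Your route is more elementary --- it uses only monotonicity of the quantization error, the two power-law lemmas, the bounded-ratio control on $\Phi_{\omega,n}$, and the boundedness of $\{t_{n,r}\}$ --- and avoids Proposition~11.3 of Graf--Luschgy and the $P$/$Q$ coefficient machinery entirely, at the cost of the logarithmic bookkeeping you correctly anticipate (in particular, handling the index shift between $t_{j,r}$ and $t_{j+1,r}$, which is harmless because shifting the index by one does not change the $\liminf$ or $\limsup$). The paper's route is shorter once the $P$/$Q$ lemmas are in place, which are reused elsewhere anyway.
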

\begin{proof}
If possible, let $0\leq\underline{t}_r<\underline{D}_r(\mu_{\omega})$. Then there exists a large $n \in \N$ such that $0 \leq  t_{n,r}<\underline{D}_r(\mu_{\omega})$. Then by \cite[Proposition 11.3]{Graf2000FoundationsOQ}, we have $\overline{Q}_r^{t_{n,r}}(\mu_{\omega})=\infty$. Hence, by Lemma \ref{lemma:P>Q>NQ and  P<Q<NQ}, it follows that $\overline{P}_r^{t_{n,r}}(\mu_{\omega})=\infty$. This contradicts Lemma \ref{lemma:phi V_phi<D}.

Again, if we assume that $\underline{t}_r>\underline{D}_r(\mu_{\omega})$ then for some large $n \in \N$, we have $t_{n,r}>\underline{D}_r(\mu_{\omega})$. Then by \cite[Proposition 11.3]{Graf2000FoundationsOQ}, we can deduce $\underline{Q}_r^{~t_{n,r}}(\mu_{\omega})=0$, which implies (by Corollary \ref{corr:P iff Q}) $\underline{P}_r^{~t_{n,r}}(\mu_{\omega})=0$. This contradicts Lemma \ref{lemma:D<phi V_phi}.

Hence $\underline{D}_r(\mu_{\omega})=\underline{t}_r$. Similarly, the other equality can be proved
\end{proof}

\begin{lemma}\label{lemma: ljn goes to infinity}
    For $j=1,2$, $l_{jn} (\om{})\to \infty$ as $n \to \infty$.
\end{lemma}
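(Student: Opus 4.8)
The plan is to read off a quantitative lower bound on $|\sigma|$ for \emph{every} $\sigma\in\Gamma_{\omega,n}$ straight from the inequality defining $\Gamma_{\omega,n}$, and then send $n\to\infty$. Since $\Gamma_{\omega,n}$ is non-empty for every $n$ (as noted just before its definition), the quantities $l_{1n}(\omega)$ and $l_{2n}(\omega)$ are well defined, and because $l_{1n}(\omega)\le l_{2n}(\omega)$ it suffices to treat $j=1$.

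First I would fix $n\in\N$ and an arbitrary $\sigma\in\Gamma_{\omega,n}$. By definition of $\Gamma_{\omega,n}$ we have $p_\sigma c_\sigma^r< n^{-1}(pc^r)< n^{-1}$, where $p=\min\{p_{i,j}\}$ and $c=\min\{c_{i,j}\}$. On the other hand, writing $p_\sigma c_\sigma^r=\prod_{j=1}^{|\sigma|}\big(p_{\omega_j,\sigma_j}c_{\omega_j,\sigma_j}^r\big)$ and using that each factor is at least $pc^r$, we get $p_\sigma c_\sigma^r\ge (pc^r)^{|\sigma|}$. Note that $0<pc^r<1$: indeed $0<c_{i,j}<1$ gives $c<1$, and since $\operatorname{card}(\mathbf I_i)>1$ with $\sum_{j\in\mathbf I_i}p_{i,j}=1$ we have $p_{i,j}<1$ for all $i,j$, hence $p<1$. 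Combining the two displays gives $(pc^r)^{|\sigma|}<n^{-1}$, and taking logarithms (recalling $\log(pc^r)<0$) yields
\begin{equation*}
|\sigma| > \frac{\log n}{-\log(pc^r)}.
\end{equation*}

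Since this lower bound is uniform over all $\sigma\in\Gamma_{\omega,n}$, it holds in particular for a minimiser, so $l_{1n}(\omega)>\log n / \big(-\log(pc^r)\big)$. As $-\log(pc^r)$ is a fixed positive constant, independent of $n$ and $\omega$, letting $n\to\infty$ gives $l_{1n}(\omega)\to\infty$. Finally, from $l_{2n}(\omega)\ge l_{1n}(\omega)$ for every $n$ we obtain $l_{2n}(\omega)\to\infty$ as well, which handles $j=2$ and completes the argument.

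I do not expect any genuine obstacle here: the statement is essentially a bookkeeping consequence of the threshold inequality defining $\Gamma_{\omega,n}$. The only points that need (minimal) care are verifying that $pc^r\in(0,1)$ so that the logarithm carries the correct sign, and recording that $\Gamma_{\omega,n}\neq\emptyset$ so that $l_{1n},l_{2n}$ make sense.
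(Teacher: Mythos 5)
Your argument is correct. You use the same two basic inequalities that the paper does, namely $p_\sigma c_\sigma^r < n^{-1}(pc^r)$ from the definition of $\Gamma_{\omega,n}$ and $p_\sigma c_\sigma^r \ge (pc^r)^{|\sigma|}$ from termwise minimization, but you package them as a direct quantitative lower bound $l_{1n}(\omega) > \log n / \bigl(-\log(pc^r)\bigr)$ that holds uniformly over all $\sigma\in\Gamma_{\omega,n}$ and visibly tends to infinity. The paper instead argues by contradiction: it assumes $l_{1n}\le l_0$ for all $n$, derives an impossibility for large $n$, and then appeals to the eventual monotonicity $l_{1(n+1)}\ge l_{1n}$ established in the proof of Lemma~\ref{lemma:phi_ngoes to infinity} to upgrade unboundedness to divergence. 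Your route is cleaner: it is self-contained (no need to reach into the proof of another lemma), it avoids the contradiction framing entirely, and it yields an explicit rate, which is strictly more information than the statement requires. Both proofs are sound; yours is arguably the more transparent presentation of the same underlying estimate.
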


\begin{proof}
Suppose that for all $n \in \N$, $\displaystyle{1\leq l_{1n}\leq l_0}$
for some $l_0\in \N$. Then for every $n \in \N$ there exists $\sigma^{(n)} \in \gmomgn{n}$ with $\abs{\sigma^{(n)}}=l_{1n}$ so that \begin{equation}\label{inq:ljn go to infty 1}
(p_{\sigma^{(n)}}c_{\sigma^{(n)}}^r)\geq (pc^r)^{l_{1n}}\geq (pc^r)^{l_0}.
\end{equation}
Also, for large enough $n \in \N$, we have \begin{equation}\label{inq:ljn go to infty 2}
(pc^r/n)<(pc^r)^{l_0}.
\end{equation}
Combining (\ref{inq:ljn go to infty 1}) and (\ref{inq:ljn go to infty 2}), we have for large enough $n \in \N$
\[(p_{\sigma^{(n)}}c_{\sigma^{(n)}}^r)<(pc^r/n)<(pc^r)^{l_0}\leq (p_{\sigma^{(n)}}c_{\sigma^{(n)}}^r),\] which is not possible. 

Since from the proof of Lemma $\ref{lemma:phi_ngoes to infinity}$, it is evident that for large enough $n\in \N$,  $l_{1(n+1)}\geq l_{1n}$ and $l_{2(n+1)}\geq l_{2n}$, we deduce that $l_{2n}\geq l_{1n} \to \infty$ as $n \to \infty.$ 
\end{proof}

\begin{lemma}\label{lemma:l_n^1 l_n^2}
    For every $n \in \mathbb{N}$ there exists $l_n^{(1)}=l_n^{(1)}(\om{}),~l_n^{(2)}=l_n^{(2)}(\om{}) \in \mathbb{N}$ such that $l_{1n}(\om{}) \leq l_n^{(1)}(\om{}), l_n^{(2)}(\om{}) \leq l_{2n}(\om{})$ and $s_{l_n^{(1)},r}(\om{})\leq t_{n,r}(\om{}) \leq s_{l_n^{(2)},r}(\om{})$.
\end{lemma}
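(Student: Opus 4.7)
The plan is to translate the two desired inequalities into a comparison of a single level-pressure function with $1$ at a common point $z_n := t_{n,r}(\om{})/(r+t_{n,r}(\om{}))$. Set
\[
G_l(z) := \sum_{\tau \in \lmomgp{l}}(p_\tau c_\tau^r)^z = \prod_{k=1}^l R_k(z), \qquad R_k(z) := \sum_{j \in \mathbf{I}_{\om{k}}}(p_{\om{k},j}c_{\om{k},j}^r)^z,
\]
the product form being immediate from distributing the sum over levels. Each $G_l(\cdot)$ is strictly decreasing in $z$ since every $p_\tau c_\tau^r \in (0,1)$, and by (\ref{eqn:qnt dim of periodic}) one has $G_l(y_l) = 1$ for $y_l := s_{l,r}(\om{})/(r+s_{l,r}(\om{}))$. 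Since $s \mapsto s/(r+s)$ is strictly increasing on $[0,\infty)$, the inequality $s_{l,r} \le t_{n,r}$ is equivalent to $G_l(z_n) \le 1$, so the task reduces to producing $l_n^{(1)},~l_n^{(2)} \in [l_{1n},l_{2n}]$ with $G_{l_n^{(1)}}(z_n) \le 1 \le G_{l_n^{(2)}}(z_n)$.

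The heart of the argument is a telescoping identity. Because $\gmomgn{n}$ is a finite maximal antichain, every $\tau \in \lmomgp{l_{2n}}$ has a unique ancestor $\sigma \in \gmomgn{n}$; partitioning $\lmomgp{l_{2n}}$ accordingly and factorizing,
\[
\sum_{\tau \succcurlyeq \sigma,\, \abs{\tau}=l_{2n}} (p_\tau c_\tau^r)^{z_n} = (p_\sigma c_\sigma^r)^{z_n}\prod_{k=\abs{\sigma}+1}^{l_{2n}}R_k(z_n) = (p_\sigma c_\sigma^r)^{z_n}\cdot\frac{G_{l_{2n}}(z_n)}{G_{\abs{\sigma}}(z_n)}.
\]
Summing over $\sigma$ yields $G_{l_{2n}}(z_n) = G_{l_{2n}}(z_n)\sum_{\sigma \in \gmomgn{n}}(p_\sigma c_\sigma^r)^{z_n}/G_{\abs{\sigma}}(z_n)$, and dividing by the strictly positive quantity $G_{l_{2n}}(z_n)$ gives the key identity
\[
1 = \sum_{\sigma \in \gmomgn{n}}\frac{(p_\sigma c_\sigma^r)^{z_n}}{G_{\abs{\sigma}}(z_n)}.
\]

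With this identity in hand, a two-sided contradiction against (\ref{eqn:tnr}) completes the proof. Suppose $G_l(z_n) > 1$ for every $l \in [l_{1n},l_{2n}]$; since $\abs{\sigma} \in [l_{1n},l_{2n}]$ for every $\sigma \in \gmomgn{n}$, each summand of the identity is strictly less than $(p_\sigma c_\sigma^r)^{z_n}$, producing $1 < \sum_{\sigma \in \gmomgn{n}}(p_\sigma c_\sigma^r)^{z_n} = 1$, a contradiction. Hence some $\sigma^{(1)}\in\gmomgn{n}$ satisfies $G_{\abs{\sigma^{(1)}}}(z_n) \le 1$, and I set $l_n^{(1)} := \abs{\sigma^{(1)}}$. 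Symmetrically, if $G_l(z_n) < 1$ throughout $[l_{1n},l_{2n}]$, the identity forces $1 > 1$, so some $\sigma^{(2)}$ furnishes $l_n^{(2)} := \abs{\sigma^{(2)}}$ with $G_{l_n^{(2)}}(z_n) \ge 1$. The main technical obstacle is setting up the telescoping identity cleanly via the FMA partition and the product factorization of $G_l$; once it is in place, the comparison step reduces to bookkeeping.
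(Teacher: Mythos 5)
Your proof is correct. The heart of it -- the telescoping identity $1 = \sum_{\sigma \in \gmomgn{n}}(p_\sigma c_\sigma^r)^{z}/G_{|\sigma|}(z)$, obtained by partitioning $\lmomgp{l_{2n}}$ over the FMA $\gmomgn{n}$ and factorizing level by level -- is the same combinatorial mechanism the paper uses (there it appears as the consistency relation $\sum_{\tau\in\Lambda_j(\sigma)}\Xi(\tau)=\Xi(\sigma)$ together with $\sum_{\gmomgn{n}}\Xi(\sigma)=1$). Where you diverge is the choice of base exponent and the direction of reasoning. The paper anchors the identity at $z=\overline{s}_{n,r}/(r+\overline{s}_{n,r})$, where $\overline{s}_{n,r}$ is the maximum of $s_{k,r}$ over $k\in[l_{1n},l_{2n}]$; the monotonicity of $G_k$ then gives $T_k=G_k(z)\le1$ outright, so one can drop the weights $T_{|\sigma|}^{-1}$ to obtain $\sum_{\gmomgn{n}}(p_\sigma c_\sigma^r)^{z}\le 1$ and hence $t_{n,r}\le\overline{s}_{n,r}$ directly (and symmetrically $t_{n,r}\ge\underline{s}_{n,r}$), so the paper's $l_n^{(1)},l_n^{(2)}$ are the arg-min and arg-max levels. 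You instead anchor at $z=z_n$, where $\sum_{\gmomgn{n}}(p_\sigma c_\sigma^r)^{z_n}=1$ is known a priori, and extract the required levels by a two-sided pigeonhole contradiction: if every $G_l(z_n)$ lay strictly on one side of $1$, the identity would be inconsistent with the defining equation for $t_{n,r}$. Your variant avoids introducing the extremal quantities $\underline{s}_{n,r},\overline{s}_{n,r}$ and the auxiliary bound $T_k\le1$, at the cost of arguing by contradiction rather than by a monotone chain of inequalities; both yield levels $l_n^{(1)},l_n^{(2)}\in[l_{1n},l_{2n}]$ sandwiching $t_{n,r}$, as required.
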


\begin{proof}
For $n \in \N$, set \[{\underline{s}_{n,r}:=\min_{l_{1n}\leq k \leq l_{2n}} s_{k,r}},~{\overline{s}_{n,r}:=\max_{l_{1n}\leq k \leq l_{2n}} s_{k,r}}.\]

For $l_{1n}\leq k \leq l_{2n},$ define $T_k:=\pcr{\lmomgp{k}}{\overline{s}_{n,r}}$. Then \[T_k\leq \pcr{\lmomgp{k}}{s_{k,r}}=1.\]
For $l_{1n}\leq k \leq l_{2n}$ and for $\sigma \in \lmomgp{k}$ define $\Xi(\sigma):=T_{\abs{\sigma}}^{-1}(p_{\sigma}c_{\sigma}^r)^{\frac{\overline{s}_{n,r}}{r+\overline{s}_{n,r}}}=T_{k}^{-1}(p_{\sigma}c_{\sigma}^r)^{\frac{\overline{s}_{n,r}}{r+\overline{s}_{n,r}}}.$\\

Then for $j \in \mathbb{N}$, we have
\begin{align*}
    &\sum_{\tau \in \lmbd{j}{}(\sigma)} \Xi(\tau)\\
    &=T_{\abs{\tau}}^{-1} \sum_{\tau \in \lmbd{j}{}(\sigma)} (p_{\tau}c_{\tau}^r)^{\frac{\overline{s}_{n,r}}{r+\overline{s}_{n,r}}}\\ 
    &= T_{k+j}^{-1} \cdot (p_{\sigma}c_{\sigma}^r)^{\frac{\overline{s}_{n,r}}{r+\overline{s}_{n,r}}} \cdot \prod_{i=1}^j \lr{[}{\sum_{\tau_{k+i} \in \mathbf{I}_{\om{k+i}}}\lr{(}{p_{\om{k+i},\tau_{k+i}}c_{\om{k+i},\tau_{k+i}}^r}{)}^{\frac{\overline{s}_{n,r}}{r+\overline{s}_{n,r}}}}{]}\\
    &= T_{k+j}^{-1} \cdot (p_{\sigma}c_{\sigma}^r)^{\frac{\overline{s}_{n,r}}{r+\overline{s}_{n,r}}} \cdot \prod_{i=1}^j\lr{(}{\frac{T_{k+i}}{T_{k+i-1}}}{)}\\
    &= T_{k}^{-1} \cdot (p_{\sigma}c_{\sigma}^r)^{\frac{\overline{s}_{n,r}}{r+\overline{s}_{n,r}}}=\Xi(\sigma).
\end{align*}
Using the fact that $\Gamma_{\om{},n}$ is a FMA, for $l_{1n}\leq k \leq l_{2n}$, we deduce  
\begin{align*}
    &\sum_{\sigma \in \gmomgn{n}} \Xi (\sigma)=\sum_{\sigma \in \lmomgp{k}} \Xi (\sigma) \\
\implies & \sum_{\sigma \in \gmomgn{n}} T_{\abs{\sigma}}^{-1}(p_{\sigma}c_{\sigma}^r)^{\frac{\overline{s}_{n,r}}{r+\overline{s}_{n,r}}}=1.
\end{align*} 
Since $T_{\abs{\sigma}}\leq 1$, it follows that 
\[\sum_{\sigma \in \gmomgn{n}}(p_{\sigma}c_{\sigma}^r)^{\frac{\overline{s}_{n,r}}{r+\overline{s}_{n,r}}}\leq \sum_{\sigma \in \gmomgn{n}} T_{\abs{\sigma}}^{-1}(p_{\sigma}c_{\sigma}^r)^{\frac{\overline{s}_{n,r}}{r+\overline{s}_{n,r}}} = 1 \implies t_{n,r}\leq \overline{s}_{n,r}.\]
Similarly, it can be shown that $t_{n,r}\geq \underline{s}_{n,r}$. Hence the proof.
\end{proof}

\subsection{Proofs concerning almost sure quantization dimension}\label{subsec:exact quant.}

The transition from deterministic to random dynamics requires us to control the asymptotic behaviour of the dimension function along typical realisations. By invoking the \textit{Strong Law of Large Numbers} (SLLN) on the symbolic space $\Omega$, we now show that the quantization dimension of the periodic approximations $\mu_{\omega_n^p}$ stabilizes to the constant $\kappa_r$ for $\mathbf{P}$-almost all realizations $\omega$.

\begin{proposition}
    \label{lemma:Strng law of large numbers}
    For almost all $\omega \in \Omega$ 
    
    \begin{equation*}
  \lim_{n \to \infty} D_{r}(\mu_{\omega_{n}^p})= \kappa_{r}.
    \end{equation*}
    \end{proposition}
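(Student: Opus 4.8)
The plan is to reduce the claim to the Strong Law of Large Numbers (SLLN) on $(\Omega,\mathbf{P})$ and then exploit monotonicity to cope with the fact that $s_{n,r}(\omega):=D_r(\mu_{\omega_n^p})$ is defined by a \emph{random} equation. Using $p_\sigma c_\sigma^r=\prod_{i=1}^n p_{\omega_i,\sigma_i}c_{\omega_i,\sigma_i}^r$ for $\sigma\in\Lambda_\omega^{(n)}$, equation (\ref{eqn:qnt dim of periodic}) reads $\prod_{i=1}^n\sum_{k\in\mathbf{I}_{\omega_i}}(p_{\omega_i,k}c_{\omega_i,k}^r)^{t}=1$ with $t=\frac{s_{n,r}(\omega)}{r+s_{n,r}(\omega)}$. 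Thus, setting
\[
g_j(t):=\log\sum_{k\in\mathbf{I}_j}(p_{j,k}c_{j,k}^r)^t\quad(j\in\Lambda),\qquad F_n(t):=\frac1n\sum_{i=1}^n g_{\omega_i}(t),
\]
the quantity $t_n(\omega):=\frac{s_{n,r}(\omega)}{r+s_{n,r}(\omega)}\in(0,1)$ is exactly the unique root of $F_n$. Exactly as in the proof of Proposition \ref{prop: existence of k_r}, each $g_j$ is $C^1$ and strictly decreasing on $[0,\infty)$ (since $0<p_{j,k}c_{j,k}^r<1$), hence so is $F_n$, with $F_n(0)=\frac1n\sum_i\log\operatorname{card}(\mathbf{I}_{\omega_i})>0$ and $F_n(1)<0$; this guarantees existence and uniqueness of $t_n(\omega)$. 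Put $z_0:=\krr$, so that by Proposition \ref{prop: existence of k_r} the function $T(t):=\sum_{j=1}^N\zeta_j g_j(t)=\mathbb{E}_{\mathbf P}[g_{\omega_1}(t)]$ is continuous, strictly decreasing, and vanishes precisely at $z_0$.

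Since under $\mathbf P$ the coordinates $\omega_1,\omega_2,\dots$ are i.i.d.\ with law $\zeta$, for each fixed $t$ the finitely-valued (hence integrable) variables $g_{\omega_i}(t)$ obey the SLLN, so $F_n(t)\to T(t)$ $\mathbf P$-almost surely. Choosing a countable dense set $D\subset(0,1)$ and intersecting the associated full-measure sets, I would obtain $\Omega_0$ with $\mathbf P(\Omega_0)=1$ on which $F_n(q)\to T(q)$ for \emph{all} $q\in D$. I then fix $\omega\in\Omega_0$: because the $F_n$ are monotone and $T$ is continuous, squeezing $F_n$ between its values at nearby points of $D$ (let $q_1\uparrow t$ and $q_2\downarrow t$ with $q_1<t<q_2$ in $D$) upgrades convergence on $D$ first to pointwise convergence on all of $(0,1)$, and then, by the classical fact that monotone functions converging pointwise to a continuous limit do so uniformly on compacta, to uniform convergence $F_n\to T$ on every compact subinterval of $(0,1)$.

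To finish, take any $a,b$ with $0<a<z_0<b<1$. Then $T(a)>0>T(b)$, so by uniform convergence on $[a,b]$ we have $F_n(a)>0>F_n(b)$ for all large $n$; since $F_n$ is continuous and strictly decreasing, its unique root obeys $t_n(\omega)\in(a,b)$. Letting $a\uparrow z_0$ and $b\downarrow z_0$ yields $t_n(\omega)\to z_0$, whence $s_{n,r}(\omega)=\frac{r\,t_n(\omega)}{1-t_n(\omega)}\to\frac{r z_0}{1-z_0}=\kr$; in particular the limit is finite, so no a priori boundedness of $s_{n,r}$ is needed. As this holds for every $\omega\in\Omega_0$, the proposition follows.

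The hard part is the passage from the SLLN, which governs $F_n(t)$ only for each \emph{individual} $t$, to control of the random root $t_n(\omega)$ — one cannot legitimately ``apply the SLLN at $t=t_n(\omega)$''. Monotonicity of the $F_n$ in $t$ is precisely what forces local uniform convergence and thereby pins the roots to $z_0$; everything else is routine.
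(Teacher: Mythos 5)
Your proposal is correct and rests on exactly the same ingredients as the paper's proof: equation~(\ref{eqn:qnt dim of periodic}) is rewritten as a normalized Birkhoff sum $F_n(t)=\frac1n\sum_{i=1}^n g_{\omega_i}(t)=0$, the Strong Law of Large Numbers drives $F_n\to T$ $\mathbf{P}$-a.s., and the strict monotonicity/uniqueness from Proposition~\ref{prop: existence of k_r} then pins down $\kappa_r$. The only place you diverge from the paper is the mechanism for passing to the limit through the random root. The paper observes that $\{D_r(\mu_{\omega_n^p})\}_{n\ge1}$ is a bounded sequence, extracts a subsequence $n_j$ along which $D_r(\mu_{\omega_{n_j}^p})$ converges to $\limsup_n D_r(\mu_{\omega_n^p})$, passes to the limit term-by-term (SLLN for the frequency coefficients, continuity of $g_i$ for the exponent), and invokes uniqueness; it then repeats the argument for the liminf. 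You instead upgrade SLLN-pointwise convergence to local uniform convergence of the monotone family $F_n$ (countable dense set, Pólya-type uniformity for monotone functions with continuous limit) and squeeze the roots, which dispenses with the a priori boundedness observation. Both are valid; the paper's version is slightly more direct given that boundedness is immediate here, while yours is marginally more robust and makes the ``cannot apply SLLN at a random point'' issue explicit rather than implicit.
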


\begin{proof}
    Recall that for a fixed $n$ and realization $\omega$, the $n$-th quantization dimension $s_{n,r}(\omega) := D_r(\mu_{\om{n}^p})$ is the unique root of the equation:
    \begin{equation*}\label{eq:pressure_root_n}
        \sum_{\sigma \in \Lambda_{\omega}^{(n)}} (p_{\sigma} c_{\sigma}^r)^{\frac{s_{n,r}(\omega)}{r + s_{n,r}(\omega)}} = 1.
    \end{equation*}
    For $s>0$, we define the \textit{empirical pressure function} $\Psi_{\om{},n}(s)$ by taking the normalized logarithm of the sum:
    \begin{equation*}\label{eq:empirical_pressure}
       \Psi_{\om{},n}(s) := \frac{1}{n} \log \sum_{\sigma \in \Lambda_{\omega}^{(n)}} (p_{\sigma} c_{\sigma}^r)^{\frac{s}{r + s}}.
    \end{equation*}
    By definition, $\Psi_{\om{},n}(s_{n,r}(\omega)) = 0$ for all $n \ge 1$.
    
    Due to the multiplicative structure of the weights in the RIFS, the term inside the logarithm factorizes. Specifically, for any $\sigma = (\sigma_1, \dots, \sigma_n)$, the weight decomposes as $p_\sigma c_\sigma^r = \prod_{i=1}^n p_{\omega_i, \sigma_{i}} c_{\omega_i, \sigma_{i}}^r$. Consequently, $\Psi_n$ can be expressed as a Birkhoff average of independent random variables:
    \begin{equation*}
        \Psi_{\om{},n}(s) = \frac{1}{n} \sum_{i=1}^n X_i(s, \omega),
    \end{equation*}
    where $X_{\om{i}}(s) := \log \left( \sum_{j \in I_{\omega_i}} (p_{\omega_i, j} c_{\omega_i, j}^r)^{\frac{s}{r+s}} \right)$. Since the indices $\omega_i$ are chosen independently according to the measure $\mathbf{P}$, the sequence $\{X_{\om{i}}(s)\}_{i \ge 1}$ consists of independent and identically distributed (i.i.d.) bounded random variables.
    
    We define the \textit{expected pressure function} $\Psi(s)$ as the expectation of a single increment:
    \begin{equation*}
        \Psi(s) := \mathbb{E}[X_1(s)] = \sum_{k=1}^N \zeta_k \log \left( \sum_{j \in I_k} (p_{k, j} c_{k, j}^r)^{\frac{s}{r+s}} \right).
    \end{equation*}
    From Proposition \ref{prop: existence of k_r}, the quantization dimension $\kappa_r$ is defined as the unique zero of this function, i.e., $\Psi(\kappa_r) = 0$. Furthermore, observing that the base terms $p_{k,j} c_{k,j}^r$ are strictly less than 1 and the exponent $s \mapsto \frac{s}{r+s}$ is strictly increasing for $s>0$, the function $\Psi(s)$ is strictly decreasing and continuous on $(0, \infty)$.
    
    Now, fix a bounded closed interval $J = [\kappa_r - \epsilon, \kappa_r + \epsilon]$ with $\epsilon > 0$ such that $\kr-\epsilon>0$. For any fixed $s \in J$, SLLN implies that:
    \begin{equation*}
       \Psi_{\om{},n}(s)\xrightarrow{n \to \infty} \Psi(s) \quad \text{for } \mathbf{P}\text{-almost all } \omega.
    \end{equation*}
    Since both  $\Psi_{\om{},n}(s)$ and $\Psi(s)$ are continuous and strictly monotonic, the pointwise convergence implies uniform convergence on the compact interval $J$ (a consequence of Dini's Theorem). Thus, for $\mathbf{P}$-almost all $\omega$,
    \begin{equation}\label{eq:uniform_conv}
        \sup_{s \in J} |\Psi_{\om{},n}(s) - \Psi(s)| \xrightarrow{n \to \infty} 0.
    \end{equation}
    Finally, we deduce the convergence of the roots $s_{n,r}(\omega)$. Since $\Psi(s)$ is strictly decreasing and $\Psi(\kappa_r)=0$, we have:
    $$ \Psi(\kappa_r - \epsilon) > 0 \quad \text{and} \quad \Psi(\kappa_r + \epsilon) < 0. $$
    By the almost sure uniform convergence \eqref{eq:uniform_conv}, there exists a random integer $N(\omega)$ such that for all $n \ge N(\omega)$,
    $$ |\Psi_{\om{},n}(\kappa_r \pm \epsilon) - \Psi(\kappa_r \pm \epsilon)| < \frac{1}{2} \min \{ |\Psi(\kappa_r - \epsilon)|, |\Psi(\kappa_r + \epsilon)| \}. $$
    This estimate ensures that the empirical pressure preserves the signs at the endpoints:
    $$ \Psi_{\om{},n}(\kappa_r - \epsilon) > 0 \quad \text{and} \quad \Psi_{\om{},n}(\kappa_r + \epsilon) < 0. $$
    Since $s \mapsto \Psi_{\om{},n}(s)$ is continuous, the Intermediate Value Theorem implies that its unique root $s_{n,r}(\omega)$ must satisfy:
    $$ \kappa_r - \epsilon < s_{n,r}(\omega) < \kappa_r + \epsilon. $$
    Since $\epsilon$ was arbitrary, we conclude that $\lim_{n \to \infty} s_{n,r}(\omega) = \kappa_r$ almost surely.
\end{proof}

 \begin{proposition}\label{prop: D_r=k_r}
     For almost all $\om{} \in \Om{}$ 
     \begin{equation*}
         D_r(\mu_{\om{}})=\kr.
     \end{equation*}
      \end{proposition}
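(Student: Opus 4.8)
The plan is to combine the deterministic result on periodic approximations with the convergence established in Proposition \ref{lemma:Strng law of large numbers}. First I would recall that, by Proposition \ref{lemma:Dr=tr}, it suffices to show $\underline{t}_r(\om{}) = \overline{t}_r(\om{}) = \kr$ for almost all $\om{}$, since $\underline{D}_r(\mu_{\om{}}) = \underline{t}_r(\om{})$ and $\overline{D}_r(\mu_{\om{}}) = \overline{t}_r(\om{})$. By the definition of $\underline{t}_r, \overline{t}_r$ as the liminf and limsup of the bounded sequence $\{t_{n,r}(\om{})\}$, the claim reduces to showing $\lim_{n\to\infty} t_{n,r}(\om{}) = \kr$ almost surely.

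Next I would exploit Lemma \ref{lemma:l_n^1 l_n^2}, which sandwiches $t_{n,r}(\om{})$ between $s_{l_n^{(1)},r}(\om{})$ and $s_{l_n^{(2)},r}(\om{})$, where $s_{k,r}(\om{}) = D_r(\mu_{\om{k}^p})$ and both indices $l_n^{(1)}, l_n^{(2)}$ lie in $[l_{1n}(\om{}), l_{2n}(\om{})]$. By Lemma \ref{lemma: ljn goes to infinity}, $l_{1n}(\om{}) \to \infty$, hence $l_n^{(1)}(\om{}), l_n^{(2)}(\om{}) \to \infty$ as $n \to \infty$. Therefore, on the full-measure set where Proposition \ref{lemma:Strng law of large numbers} gives $\lim_{k\to\infty} D_r(\mu_{\om{k}^p}) = \kr$, we obtain $\lim_{n\to\infty} s_{l_n^{(1)},r}(\om{}) = \lim_{n\to\infty} s_{l_n^{(2)},r}(\om{}) = \kr$, and the squeeze yields $\lim_{n\to\infty} t_{n,r}(\om{}) = \kr$. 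Combining with Proposition \ref{lemma:Dr=tr} finishes the argument.

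I would then assemble these pieces cleanly: fix the full-measure event $\Omega_0$ on which both the conclusion of Proposition \ref{lemma:Strng law of large numbers} holds and (trivially, since they hold for all $\om{}$ under UESSC) the conclusions of Lemmas \ref{lemma: ljn goes to infinity} and \ref{lemma:l_n^1 l_n^2}; for $\om{} \in \Omega_0$ run the squeeze above to get $\underline{D}_r(\mu_{\om{}}) = \overline{D}_r(\mu_{\om{}}) = \kr$, so that $D_r(\mu_{\om{}})$ exists and equals $\kr$.

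The main obstacle is essentially bookkeeping rather than a genuine difficulty: one must be careful that the auxiliary indices produced by Lemma \ref{lemma:l_n^1 l_n^2} genuinely tend to infinity — this is where Lemma \ref{lemma: ljn goes to infinity} is indispensable — and that the deterministic formula (\ref{eqn:qnt dim of periodic}) for $D_r(\mu_{\om{k}^p})$ is legitimately available, which requires the UESSC hypothesis so that the IFS $\mathbf{I}_{\om{},k}$ satisfies the SSC. No delicate estimate beyond what is already proved is needed; the real content has been front-loaded into Proposition \ref{lemma:Dr=tr}, Proposition \ref{lemma:Strng law of large numbers}, and the sandwiching lemmas.
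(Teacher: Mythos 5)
Your proposal is correct and follows essentially the same route as the paper's proof: sandwich $t_{n,r}(\om{})$ between $s_{l_n^{(1)},r}(\om{})$ and $s_{l_n^{(2)},r}(\om{})$ via Lemma \ref{lemma:l_n^1 l_n^2}, send the indices to infinity via Lemma \ref{lemma: ljn goes to infinity}, invoke Proposition \ref{lemma:Strng law of large numbers} to conclude $\underline{t}_r(\om{})=\overline{t}_r(\om{})=\kr$ almost surely, and finish with Proposition \ref{lemma:Dr=tr}. No discrepancy with the paper.
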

     \begin{proof}
By Lemmas \ref{lemma: ljn goes to infinity} and \ref{lemma:l_n^1 l_n^2}, we deduce that for all $\om{} \in \Om{}$, $l_{n}^{(j)}(\om{}) \to \infty$ as $n \to \infty$ for $j=1,2$.
So by Proposition \ref{lemma:Strng law of large numbers}, for almost all $\om{}$, we have $\lim_{n \to \infty} s_{l_n^{(j)}}(\om{})=\kr$ 
 $(j=1,2)$ and hence by Lemma \ref{lemma:l_n^1 l_n^2}
 \begin{align*}
 \underline{t}_r(\om{})=\liminf_{n \to \infty} t_{n,r}(\om{}) \geq \liminf_{n \to \infty}s_{l_n^{(1)}}(\om{}) =\kr,\\ \overline{t}_r(\om{})=\limsup_{n \to \infty} t_{n,r}(\om{}) \leq \limsup_{n \to \infty}s_{l_n^{(2)}}(\om{})=\kr,
 \end{align*}
 which implies $\underline{t}_r(\om{})=\overline{t}_r(\om{})=\kr$. So by Proposition \ref{lemma:Dr=tr}, it follows that for almost all $\om{}$, $D_r(\mu_{\om{}})=\kr.$
\end{proof}

From Propositions \ref{lemma:Strng law of large numbers} and \ref{prop: D_r=k_r}, it follows that the quantization dimension of $\mu_{\om{}}$ (which are not necessarily self-similar) can be approximated almost surely by quantization dimensions of self-similar measures, provided the underlying RIFS satisfies the UESSC.

\section{Conclusion}
In this work, we have rigorously established the quantization dimension for a class of $1$-variable random self-similar measures. By lifting the dynamics to the symbolic space, we resolved the difficulties posed by non-uniform geometric scaling. Our main result (Theorem \ref{thm: main}) confirms that the quantization dimension is determined by the root of the \textit{expected topological pressure}, generalizing the classical deterministic results of Graf and Luschgy. This finding reinforces the link between quantization theory and the thermodynamic formalism of random dynamical systems. Future work may extend this ergodic approach to the more complex settings like general random self-affine or graph-directed systems, where the pressure functions may involve matrix products.

\section*{Declarations}

\begin{itemize}
\item Funding: 
\begin{enumerate}
    \item A. Banerjee acknowledges the Council of Scientific \& Industrial Research (CSIR), India, for the financial support under the scheme “JRF” (File No. 08/0155(12963)/2022-EMR-I).
     \item A. Hossain acknowledges the Council of Scientific \& Industrial Research (CSIR), India, for the financial support under the scheme “JRF” (File No. 08/155(0065)/2019-EMR-I).
    \item Md. N. Akhtar acknowledges the Department of Science and Technology (DST), Govt. of India, for the financial support under the scheme “Fund for Improvement of S\&T Infrastructure (FIST)” (File No. SR/FST/MS-I/2019/41).
    \end{enumerate}

\item Conflict of interest/Competing interests (check journal-specific guidelines for which heading to use):  All authors certify that they have no affiliations with or involvement in any organization or entity with any financial or non-financial interest in the subject matter or materials discussed in this manuscript.

\item Ethics approval and consent to participate: This work did not contain any studies involving animal or human participants, nor did it occur in any private or protected areas. No specific permissions were required for corresponding locations.

\item Consent for publication: All authors have given consent to publish this work.

\item Data availability: No Data associated in the manuscript.

\item Materials availability: Not applicable.

\item Code availability: Not applicable.

\item Author contribution: All the authors contributed equally to this work.
\end{itemize}

\bibliographystyle{abbrv}
\bibliography{sn-bibliography}

\end{document}